\theoremstyle{plain}
 \newtheorem{thm}{Theorem}
 \newtheorem{lem}{Lemma}
 \newtheorem{prop}{Proposition}
\newtheorem{defn}{Definition}
\newtheorem{rem}{Remark}
\newcommand{\Y}{\includegraphics[width=1.3em]{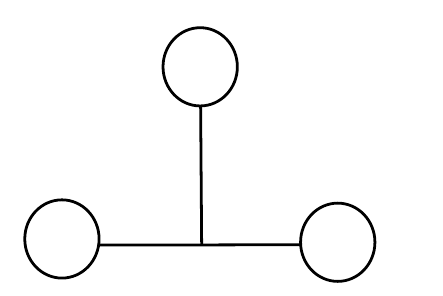}}
\begin{document}

\title{Classification and Models of Simply-connected Trivalent $2$-dimensional Stratifolds}

\author{J. C. G\'{o}mez-Larra\~{n}aga\thanks{Centro de
Investigaci\'{o}n en Matem\'{a}ticas, A.P. 402, Guanajuato 36000, Gto. M\'{e}xico. jcarlos@cimat.mx} \and F.
Gonz\'alez-Acu\~na\thanks{Instituto de Matem\'aticas, UNAM, 62210 Cuernavaca, Morelos,
M\'{e}xico and Centro de
Investigaci\'{o}n en Matem\'{a}ticas, A.P. 402, Guanajuato 36000,
Gto. M\'{e}xico. fico@math.unam.mx} \and Wolfgang
Heil\thanks{Department of Mathematics, Florida State University,
Tallahasee, FL 32306, USA. heil@math.fsu.edu}}
\date{}

\maketitle

\begin{abstract} Trivalent $2$-stratifolds are a generalization of $2$-manifolds in that there are disjoint simple closed curves where three sheets meet. We obtain a classification of $1$-connected $2$-stratifolds in terms of their associated labeled graphs and develop operations that will construct from a single vertex all graphs that represent $1$-connected $2$-stratifolds.\end{abstract}

Mathematics Subject classification: 57M20, 57M05, 57M15

Keywords: stratifold, simply connected, trivalent graph.

\section{Introduction}   

In Topological Data Analysis  one studies high dimensional data sets by extracting shapes. Many of these shapes are $2$-dimensional simplicial complexes where it is computationally possible to calculate topological invariants such as the fundamental group or homology groups (see for example \cite{CL}). $2$-complexes that are amenable for more detailed analysis are foams and $2$-stratifolds. Foams include special spines of 3-dimensional manifolds \cite{M}, \cite{P}.  Khovanov \cite{Ko} used trivalent foams to construct a bigraded homology theory whose Euler characteristic is a quantum $sl(3)$ link invariant and Carter \cite{SC} presented an analogue of the Reidemeister-type moves for knotted foams in $4$-space. 

A closed $2$-{\it stratifold} is a $2$-dimensional cell complex $X$ that contains a collection of finitely many simple closed curves, the components of the $1$-skeleton $X^{(1)}$ of $X$, such that $X-X^{(1)}$ is a $2$-manifold and a neighborhood of each component $C$ of $X^{(1)}$ consists of $n\geq 3$ sheets. 

It is not not known which $3$-manifolds have spines that are $2$-stratifolds. There are significant differences: for any given  $2$-stratifold there are infinitely many non-homeomorphic $2$-stratifolds with the same fundamental group. $3$-manifold groups are residually finite, but every Baumslag-Solitar group (some of which are Hopfian, others are non-Hopfian) can be realized as the fundamental group of a simple $2$-stratifold. 

However it can be shown that fundamental groups of $2$-stratifolds have solvable word problem (work in progress). 
 
$2$-stratifolds arise in the study of categorical invariants of $3$-manifolds. For example if $\mathcal{G}$ is a non-empty family of groups that is closed under subgroups, one would like to determine which (closed) $3$-manifolds have $\mathcal{G}$-category equal to $3$. In \cite{GGL} it is shown that such manifolds have a decomposition into three compact $3$-submanifolds $H_1 ,H_2 ,H_3$ , where the intersection of $H_i \cap H_j$ (for $i\neq j$) is a compact $2$-manifold, and each $H_i$ is $\mathcal{G}$-contractible (i.e. the image of the fundamental group of each connected component of $H_i$ in the fundamental group of the $3$-manifold is in the family $\mathcal{G}$). The nerve of this decomposition, which is the union of all the intersections $H_i \cap H_j$ ($i\neq j$), is a closed $2$-stratifold and determines whether the $\mathcal{G}$-category of the 3-manifold is $2$ or $3$.

A $2$-stratifold is essentially determined by its associated bipartite labelled graph (defined in section 2) and a presentation for its fundamental group can be read off from the labelled graph. Thus the question arises when a labelled graph determines a simply connected $2$-stratifold. In \cite{GGH} it is shown that a necessary condition is that the underlying graph must be a tree; if the graph is linear then sufficient and necessary conditions on the labelling are given, and if the graph is trivalent (definition in section 2), an algorithm on the labelled graph was developed for determining whether the graph determines a simply connected $2$-stratifold. In \cite{GGH1} an algorithm is given that decides whether a given labelled graph (not necessarily trivalent) determines a $2$-stratifold that is homotopy equivalent to $S^2$. 

The main result of the present paper (in section 3) is a classification of all trivalent labelled graphs that represent simply connected $2$-stratifolds. Then in  sections 4 and 5 we develop three operations on labelled graphs that will construct from a single vertex all trivalent graphs that represent simply connected $2$-stratifolds.

 %%%%%%%%%%%%%%%%%%%%%%%%%%%%%%%%%%%%%%%%%%%%%%%%%%%%%%%%
\section{Properties of the graph of a $2$-stratifold.}

We first review the basic definitions and some results given in \cite{GGH} and \cite{GGH1}. A  $2$-{\it stratifold} is a compact, Hausdorff space $X$ that contains a closed (possibly disconnected) $1$-manifold $X^{(1)}$ as a closed subspace with the following property: Each  point $x\in X^{(1)}$  has a neighborhood homeomorphic to $\mathbb{R}{\times}CL$, where $CL$ is the open cone on $L$ for some (finite) set $L$ of cardinality $>2$  and $X - X^{(1)}$ is a (possibly disconnected) $2$-manifold.\\

A component $C\approx S^1$ of $X^{(1)}$ has a regular neighborhood $N(C)= N_{\pi}(C)$ that is homeomorphic to $(Y {\times}[0,1]) /(y,1)\sim (h(y),0)$, where $Y$ is the closed cone on the discrete space $\{1,2,...,d\}$ and $h:Y\to Y$ is a homeomorphism whose restriction to $\{1,2,...,d\}$ is the permutation $\pi:\{1,2,...,d\}\to  \{1,2,...,d\}$. The space $N_{\pi}(C)$ depends only on the conjugacy class of $\pi \in S_d$ and therefore is determined by a partition of $d$. A component of $\partial N_{\pi}(C)$ corresponds then to a summand of the partition determined by $\pi$. Here the neighborhoods $N(C)$ are chosen sufficiently small so that for disjoint components $C$ and $C'$ of $X^{(1)}$, $N(C)$ is disjoint from $N(C' )$. The components of $\overline{N(C)-C}$ are called the {\it sheets} of $N(C)$.\\

For a given $2$- stratifold $(X,X^{(1)} )$ there is an associated bipartite graph $G=G(X,X^{(1)} )$ embedded in $X$ as follows:\\

In each component $C_j$ of $X^{(1)}$ choose a black vertex $b_j$. In the interior of each component $W_i$ of $M=\overline{X-\cup_j N(C_j)}$ choose a white vertex $w_i$. In each component $S_{ij}$ of $W_i \cap N(C_j )$ choose a point $y_{ij}$, an arc $\alpha_{ij} $ in $W_i$ from $w_i$ to $y_{ij}$ and an arc $\beta_{ij}$ from $y_{ij}$ to $b_j$ in the sheet of $N(C_j )$ containing $y_{ij}$. An edge $e_{ij}$ between $w_i$ and $b_j$ consists of the arc $\alpha_{ij} *\beta_{ij}$. For a fixed $i$, the arcs $\alpha_{ij}$ are chosen to meet only at $w_i$.\\

We label the graph $G$ by assigning to a white vertex $W$ its genus $g$ of $W$ and by labelling an edge $S$ by $k$, where $k$ is the summand of the partition $\pi$ corresponding to the component $S$  of  $\partial N_{\pi}(C)$ where $S\subset \partial N_{\pi}(C)$. (Here we use Neumann's \cite{N} convention of assinging negative genus $g$ to nonorientable surfaces). Note that the partition $\pi$ of a black vertex is determined by the labels of the adjacent edges. If $G$ is a tree, then the labeled graph determines $X$ uniquely. \\

\noindent {\bf Notation}.  If $G$ is a bipartite labelled graph corresponding to the $2$-stratifold $X$ we let $X_G =X$ and $G_X =G$.\\

The following was shown in  \cite{GGH}.

\begin{prop}\label{retraction} There is a retraction $r:X\to G_X$. 
\end{prop}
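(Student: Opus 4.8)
The plan is to build the retraction $r:X\to G_X$ piece by piece, using the product structure of the regular neighborhoods $N(C_j)$ and the fact that each component $W_i$ of the complementary manifold $M=\overline{X-\cup_j N(C_j)}$ deformation retracts onto a spine. Recall that $G_X$ is embedded in $X$, with white vertices $w_i$ in the interiors of the $W_i$, black vertices $b_j$ on the curves $C_j$, and edges $e_{ij}=\alpha_{ij}*\beta_{ij}$. So the goal is to produce a continuous map $X\to G_X$ that is the identity on the embedded copy of $G_X$. I would assemble $r$ from compatible retractions defined on the closed pieces $N(C_j)$ and $W_i$, taking care that they agree on the overlaps $W_i\cap N(C_j)=S_{ij}$, which are exactly the boundary circles where the edges cross from the white side to the black side.

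First I would handle the neighborhood $N(C_j)=N_\pi(C_j)$ of a black curve. Using the mapping-torus description $(Y\times[0,1])/(y,1)\sim(h(y),0)$, where $Y$ is the cone on $\{1,\dots,d\}$, I would first collapse each sheet radially onto the core circle $C_j$, i.e.\ retract the cone fibers $Y$ onto the cone point. This retracts $N(C_j)$ onto $C_j\approx S^1$. Then I would retract the circle $C_j$ onto the black vertex $b_j$ together with the arcs $\beta_{ij}$ leading into it; concretely, I want the composite to send each boundary point $y_{ij}\in S_{ij}$ along its sheet to $b_j$ via the arc $\beta_{ij}$, and more generally to send all of the sheet $S_{ij}$ into the edge portion $\beta_{ij}\cup\{b_j\}$. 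The key point is that the partition of $d$ is respected: distinct boundary components of $\partial N_\pi(C_j)$ correspond to distinct edges, so the sheets get sent to the correct edges, and this is where the labelling is used consistently. This gives a retraction $r_j:N(C_j)\to (\bigcup_i\beta_{ij})\cup\{b_j\}$, which is a subtree of $G_X$.

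Next I would handle a white piece $W_i$, which is a compact $2$-manifold (with boundary the circles $S_{ij}$). Every compact surface with nonempty boundary deformation retracts onto a wedge of circles (a spine), but here I want the retraction to land in the part of $G_X$ lying in $W_i$, namely $w_i$ together with the arcs $\alpha_{ij}$, and to send each boundary circle $S_{ij}$ to the endpoint $y_{ij}$. I would first retract $W_i$ onto a collar-adjusted spine that contains the arcs $\alpha_{ij}$ and the points $y_{ij}$, then collapse the rest of the spine onto the white vertex $w_i$; since the arcs $\alpha_{ij}$ all meet only at $w_i$, collapsing the complementary loops to $w_i$ carries $W_i$ onto $\{w_i\}\cup\bigcup_j\alpha_{ij}$, the white star of $G_X$ in $W_i$. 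Both $r_j$ and this white retraction send $S_{ij}$ to $y_{ij}$, so they agree on the overlaps and glue to a global retraction $r:X\to G_X$ by the pasting lemma.

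The main obstacle I anticipate is purely at the level of matching the pieces along the circles $S_{ij}$: I must choose the retraction of $W_i$ so that it is the identity on $y_{ij}$ and sends all of $S_{ij}$ to $y_{ij}$, while the neighborhood retraction $r_j$ does the same, so that the glued map is both well-defined and continuous. Verifying that a sheet of $N(C_j)$ can be retracted onto its edge arc $\beta_{ij}$ consistently across the permutation-induced identification of sheets in the mapping torus is the one place where a little care with the homeomorphism $h$ is needed; everything else is a standard surface-spine deformation retraction. Since $G_X\subset X$ is already embedded with $r|_{G_X}=\mathrm{id}$ built into the construction, once continuity on overlaps is checked the retraction property is immediate.
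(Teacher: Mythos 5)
Your proposal is correct and follows what is essentially the intended argument: the paper gives no proof of Proposition~\ref{retraction} (it is quoted from \cite{GGH}), and the natural construction is exactly your piecewise one --- retract each $N(C_j)$ onto its black star $\{b_j\}\cup\bigcup_i\beta_{ij}$ and each $W_i$ onto its white star $\{w_i\}\cup\bigcup_j\alpha_{ij}$, sending every circle $S_{ij}$ to $y_{ij}$ so the pieces agree on overlaps and glue by the pasting lemma. The one step to phrase carefully is the one you yourself flagged: the literal composite ``collapse cone fibers onto $C_j$, then push $C_j$ to $b_j$'' is not the identity on the arcs $\beta_{ij}$, but this is easily repaired since each star is a finite contractible tree, hence an absolute retract, so the identity on the star extends to the required retraction of $N(C_j)$ (respectively $W_i$) after passing to the quotient in which each $S_{ij}$ is crushed to $y_{ij}$; defining the map on $C_j$ first and then extending over each sheet settles the consistency along the permutation-glued sheets.
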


 \begin{prop}\label{simplyconnected} If $X$ is simply connected, then $G_X$ is a tree, all white vertices of $G_X$ have genus $0$, and all terminal vertices are white.
\end{prop}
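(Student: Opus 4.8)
The plan is to prove the three assertions about $G_X$ in turn, using the retraction $r\colon X\to G_X$ from Proposition~\ref{retraction} together with the hypothesis that $X$ is simply connected.

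First I would establish that $G_X$ is a tree. Since $r\colon X\to G_X$ is a retraction and $X$ is simply connected (hence connected and with trivial fundamental group), $G_X$ is connected, and the induced map $r_*\colon \pi_1(X)\to \pi_1(G_X)$ is surjective. But $\pi_1(X)=1$, so $\pi_1(G_X)=1$. A connected graph with trivial fundamental group has no cycles, hence is a tree. This is the cleanest step and follows almost immediately from the retraction.

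Next I would show every white vertex has genus $0$. The idea is that the genus of a white vertex $w_i$ records the genus of the surface-with-boundary $W_i$, and a positive (or negative, in the nonorientable case) genus surface injects into $X$ at the level of fundamental groups enough to produce nontrivial loops. Concretely, I would argue that if some $W_i$ had nonzero genus, then $W_i$ carries an essential loop $\gamma$; composing with the retraction and with the fact that $W_i$ is attached to $X$ only along boundary circles (the sheets), I would show $\gamma$ represents a nontrivial element of $\pi_1(X)$, contradicting simple connectivity. The key technical point is to verify that such a genus-carrying loop cannot be killed by the attaching of sheets along the boundary circles of $W_i$; I would make this precise by examining the presentation of $\pi_1(X)$ read off from the labelled graph, or by a Mayer--Vietoris/van~Kampen argument across $N(C_j)$ and $M$.

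Finally, for the claim that all terminal vertices are white, I would argue by contradiction: suppose some terminal vertex is a black vertex $b_j$, corresponding to a component $C_j$ of $X^{(1)}$ with only one incident edge. A terminal black vertex means $N(C_j)$ has only a single sheet meeting the rest of $X$, so the partition $\pi$ of $C_j$ has a single summand attached to $M$; the neighborhood $N_\pi(C_j)$ then retracts onto $C_j\approx S^1$ in a way that contributes a free generator to $\pi_1(X)$ that cannot be killed, again contradicting $\pi_1(X)=1$. I expect the main obstacle to be the middle step: ruling out nonzero genus requires carefully controlling how the boundary circles of $W_i$ sit inside the sheet structure of the adjacent $N(C_j)$, and showing that the relations introduced by these sheets in the van~Kampen presentation cannot trivialize a genus-generated loop. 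The tree and terminal-vertex steps are comparatively routine once the presentation of $\pi_1(X)$ from the labelled graph is set up.
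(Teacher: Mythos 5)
Your first step (the tree claim) is exactly the paper's: the retraction of Proposition~\ref{retraction} makes $\pi_1(G_X)$ a quotient of $\pi_1(X)=1$, so $G_X$ is a tree. But the other two steps have genuine gaps, and in both cases the missing idea is the same one the paper relies on: the pruning construction. For a subgraph $\Gamma$ one collapses each component of the complement of $r^{-1}(\Gamma)$ to a point, producing a genuine $2$-stratifold $X_{\hat\Gamma}$ together with a quotient map $X\to X_{\hat\Gamma}$ that is \emph{surjective} on $\pi_1$ (Remark~\ref{pruning}). For the genus claim, pruning at a single white vertex $w_i$ attaches disks to all boundary circles of $W_i$, yielding a closed surface of genus $g$; if $g\neq 0$ this surface has nontrivial fundamental group, which is a quotient of $\pi_1(X)=1$ --- contradiction. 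Your proposal instead tries to prove the much stronger \emph{injectivity}-type statement that a genus loop of $W_i$ survives in $\pi_1(X)$, and you yourself flag this as the main obstacle without resolving it. That is precisely the step that needs proof, and attacking it ``by examining the presentation'' is hard because $\pi_1(X)$ is a complicated iterated amalgam in which it is not obvious a priori which elements survive. The quotient argument sidesteps injectivity entirely: only surjectivity of the collapse map is needed, and that is cheap.

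The terminal-vertex step as you state it is actually incorrect in its key assertion. A terminal black vertex $b_j$ means $\partial N_\pi(C_j)$ has a single component, i.e.\ the partition of $d\geq 3$ is the single summand $d$, and that boundary circle wraps $d$ times around the core $C_j$. This does \emph{not} contribute ``a free generator to $\pi_1(X)$ that cannot be killed'': pruning at $b_j$ (collapsing the rest of $X$, i.e.\ capping the single boundary circle with a disk) gives a space with fundamental group $\mathbb{Z}/d$, so the core circle is a \emph{torsion} element in the relevant quotient, not a free one. The correct contradiction is that $\mathbb{Z}/d\neq 1$ (as $d\geq 3$) is a quotient of $\pi_1(X)=1$; this is also why the paper elsewhere needs $H_1(X;\mathbb{Z}_3)$, not just free abelianization data, to detect terminal black vertices in the trivalent case. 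So while your overall architecture (retraction plus local analysis at bad vertices) points in the right direction, both local steps need to be rerouted through the collapse-to-quotient construction to become proofs.
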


The proof uses Proposition \ref{retraction} and the following pruning construction:\\

\noindent {\bf Pruning at a subgraph}. For a subgraph $\Gamma$ of $G$ and the retraction $r:X_G \to G$, let $Y=r^{-1}(\Gamma )$. This is almost a $2$-stratifold, except that $Y$ has possibly boundary curves corresponding to edges of $st(\Gamma )-\Gamma$, where $st(\Gamma )$ is the star of $\Gamma$ in $G$. Let $\hat{Y}$ be the quotient of $X$ obtained by collapsing the closure of each component of $X_G  - Y$ to a point i.e. $\hat{Y}$ is obtained from $Y$ by attaching disks to its boundary curves. Then $\hat{Y}$ is a $2$-stratifold, $\hat{Y}=X_{\hat{\Gamma}}$, whose graph ${\hat \Gamma}$ is the union of $\Gamma$ and the labeled edges (with their vertices) of $st(\Gamma )-\Gamma$ which are adjacent to a black vertex of $\Gamma$. 

\begin{figure}[ht]
\begin{center}
\includegraphics[width=4in]{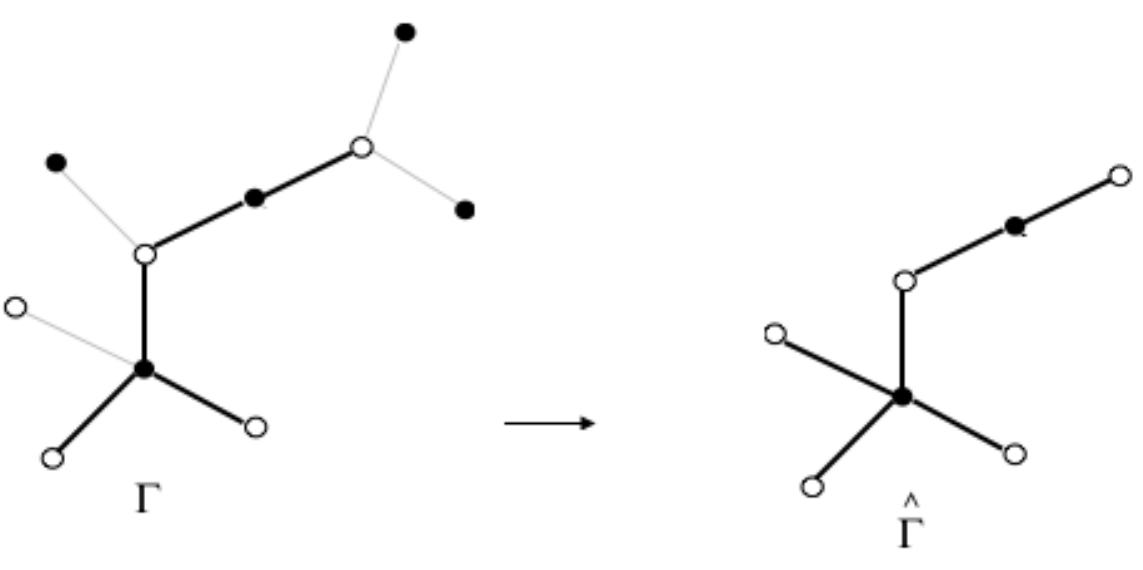} 
\end{center}
\caption{$G,\,\Gamma$ and ${\hat{\Gamma}}$}
\end{figure}

\begin{defn} $P$ is a pruned subgraph of $G$ if $P=\hat{\Gamma}$, for some subgraph $\Gamma$ of $G$.
\end{defn}

\begin{rem}\label{pruning} For a pruned subgraph $P$ of $G$, the quotient map $X_G \to X_P$ induces surjections $\pi (X_G )\to \pi (X_P)$ and $H_1 (X_G ;\mathbb{Z}_2 )\to H_1 (X_P ;\mathbb{Z}_2 )$. Therefore, if $X_G$ is simply connected, $X_P$ is simply connected. 
\end{rem}

In this paper we consider trivalent graphs. A $2$-stratifold $X$ and its  labeled bicolored graph $G_X$ are defined to be {\it trivalent}, if each black vertex $b$ is incident to either three edges each with label $1$ or to two edges, one with label $1$, the other with label $2$, or $b$ is a terminal vertex with adjacent edge of label $3$. This means that a neighborhood of a point of a component $C$ of the $1$-skeleton $X^{(1)}$ has $3$ sheets, so the permutation $\pi:\{1,2,3\}\to  \{1,2,3\}$ of the regular neighborhood $N(C)= N_{\pi}(C)$ has partition $1+1+1$ or $1+2$ or $3$.\\

One of the main results in \cite{GGH} (Theorem 5) is that a trivalent 2-stratifold is $1$-connected if and only if $H_1(X;\mathbb{Z}_2) = H_1(X;\mathbb{Z}_3) = 0$. The second condition is only needed to insure that $G_X$ has no terminal black vertices, therefore:

 \begin{thm}\label{pi=H} Let $X$ be a trivalent 2-stratifold such that all terminal vertices are white. Then $X$ is $1$-connected if and only if $H_1(X;\mathbb{Z}_2) = 0$.
\end{thm}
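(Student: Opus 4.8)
The plan is to deduce the statement from Theorem 5 of \cite{GGH}, which asserts that a trivalent $2$-stratifold is $1$-connected if and only if $H_1(X;\mathbb{Z}_2)=H_1(X;\mathbb{Z}_3)=0$. The forward implication is then immediate: if $X$ is $1$-connected, Theorem 5 gives $H_1(X;\mathbb{Z}_2)=0$. For the converse I assume $H_1(X;\mathbb{Z}_2)=0$ together with the hypothesis that every terminal vertex of $G_X$ is white, and I must produce the one remaining ingredient of Theorem 5, namely $H_1(X;\mathbb{Z}_3)=0$; then $1$-connectivity follows. (Throughout I take $X$, equivalently $G_X$, connected, since otherwise $1$-connectivity already fails.)

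The computational engine is a Mayer--Vietoris argument for the decomposition $X=M\cup N$, where $N=\bigcup_j N(C_j)$ is the union of the regular neighborhoods of the black curves and $M=\overline{X-N}$ is the union of the white surfaces; here $M\cap N$ is a disjoint union of circles in bijection with the edges of $G_X$, each $N(C_j)$ deformation retracts to $C_j\approx S^1$, and the inclusion $M\cap N\hookrightarrow N$ sends the circle of an edge $e$ to $\ell_e$ times the core class, where $\ell_e\in\{1,2,3\}$ is the edge label. Working over a field $\mathbb{F}$, the sequence expresses $H_1(X;\mathbb{F})$ through three pieces: the graph term $H_1(G_X;\mathbb{F})=\mathbb{F}^{\beta_1(G_X)}$, the genera of the white vertices, and the cokernel of the ``black relation matrix'' $R$, whose row for a black vertex $C_j$ records the labels $\ell_e$ of its incident edges placed at the adjacent white vertices. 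Since the first two pieces are detected over $\mathbb{Z}_2$ (the graph rank is coefficient-free and $\mathbb{Z}_2$ sees both orientable and non-orientable handles), the hypothesis $H_1(X;\mathbb{Z}_2)=0$ forces $\beta_1(G_X)=0$, so $G_X$ is a tree, and forces every white vertex to have genus $0$, in agreement with Proposition \ref{simplyconnected}. With these two facts the sequence collapses to the identity $\dim_{\mathbb{F}} H_1(X;\mathbb{F}) = V_b-\rank_{\mathbb{F}}(R)$, where $V_b$ is the number of black vertices (rows of $R$); thus $H_1(X;\mathbb{F})=0$ precisely when the rows of $R$ are linearly independent over $\mathbb{F}$.

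It remains to show that the rows of $R$ are independent over $\mathbb{Z}_3$, and here the hypotheses enter decisively. Because $G_X$ is a tree its terminal (leaf) vertices are white by assumption, so no black vertex is terminal; hence every black vertex is of type $(1,1,1)$ or $(1,2)$ and all its labels lie in $\{1,2\}$, which are units modulo $3$ (the excluded label $3$ is the unique label vanishing mod $3$, and it occurs only at terminal black vertices). Suppose a nontrivial dependence $\sum_j\mu_jR_j=0$ held over $\mathbb{Z}_3$, and let $F\subseteq G_X$ be the subforest spanned by the black vertices with $\mu_j\neq0$ together with all their incident edges and white endpoints. At a white leaf of $F$ the corresponding coordinate of the relation reduces to $\mu_j\ell_e=0$ with $\ell_e$ a unit, forcing $\mu_j=0$, so $F$ has no white leaves; and since $G_X$ is a tree each black vertex meets its $2$ or $3$ white neighbors in distinct vertices, all lying in $F$, so no black vertex is a leaf of $F$ either. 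But a nonempty forest containing an edge must have a leaf, a contradiction. Hence $R$ has full row rank over $\mathbb{Z}_3$, so $H_1(X;\mathbb{Z}_3)=0$, and Theorem 5 of \cite{GGH} gives that $X$ is $1$-connected.

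I expect the main obstacle to be the bookkeeping in the Mayer--Vietoris step: justifying the collapse to $\dim_{\mathbb{F}}H_1(X;\mathbb{F})=V_b-\rank_{\mathbb{F}}(R)$ requires identifying the image of the boundary circles inside $H_1$ of each genus-$0$ white surface (where the only relation among the boundary classes is that their sum vanishes) and checking that the genus contributions are exactly what $H_1(X;\mathbb{Z}_2)=0$ kills. Once this identity is in place the mod-$3$ independence is the genuinely new point, and the forest argument above isolates why it holds: the only possible sources of a mod-$3$ degeneracy are a label-$3$ terminal black vertex, excluded by hypothesis, or a multi-edge or cycle in $G_X$, which the tree property rules out.
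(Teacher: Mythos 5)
Your proof is correct, but it takes a genuinely different route from the paper. The paper offers no new argument at all: it observes that in the proof of Theorem 5 of \cite{GGH} (a trivalent $X$ is $1$-connected iff $H_1(X;\mathbb{Z}_2)=H_1(X;\mathbb{Z}_3)=0$) the mod-$3$ hypothesis is used \emph{only} to rule out terminal black vertices, so under the standing assumption that all terminal vertices are white the mod-$2$ condition alone suffices --- a remark that is only as transparent as the reader's access to the internals of \cite{GGH}. You instead treat Theorem 5 purely as a black box and manufacture the missing hypothesis, proving that $H_1(X;\mathbb{Z}_2)=0$ plus the absence of terminal black vertices forces $H_1(X;\mathbb{Z}_3)=0$. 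Your bookkeeping checks out: mod-$2$ acyclicity forces $G_X$ to be a tree (via the retraction of Proposition \ref{retraction}, or your cokernel analysis) with all white genera $0$, since the genus summands of $H_1$ of the white surfaces survive into the Mayer--Vietoris cokernel and are visible over $\mathbb{Z}_2$ in both the orientable and nonorientable cases; after that, for any field $\mathbb{F}$ one gets $H_1(X;\mathbb{F})\cong\mathbb{F}^{V_b}$ modulo one relation per white vertex, i.e.\ $\dim_{\mathbb{F}}H_1(X;\mathbb{F})=V_b-\rank R$, which matches the presentation of $\pi_1$ the paper itself uses in Section 4. Your forest/leaf argument for mod-$3$ row independence is sound: with no terminal black vertices every black vertex has type $(1,1,1)$ or $(1,2)$, so all its labels are units mod $3$; in a tree its white neighbors are distinct, so black vertices of the support forest $F$ have degree at least $2$ in $F$, while a white leaf of $F$ would produce a coordinate equation $\mu_j\ell_e=0$ with $\ell_e$ a unit, and a nonempty forest with an edge must have a leaf. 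One small point worth recording: with orientations taken into account the white relations read $\sum\pm\ell_e\,b_j=0$, but the signs are harmless since the entries remain units mod $3$. The trade-off: the paper's justification is a one-liner contingent on the cited proof, whereas your argument is self-contained modulo the \emph{statement} of Theorem 5 and establishes the independently interesting fact that, for trivalent graphs without label-$3$ terminal edges, mod-$2$ acyclicity already implies mod-$3$ acyclicity.
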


 %%%%%%%%%%%%%%%%%%%%%%%%%%%%%%%%%%%%%%%%%%%%%%%%%%%%%%%%
\section{Classification of $1$-connected trivalent $2$-stratifolds}

The building blocks for constructing labeled trivalent graphs for 1-connected $2$-stratifolds are called $b12$-graphs and $b111$-graphs:

\begin{defn} (1) The $b111$-tree is the bipartite tree consisting of one black vertex incident to three edges each of label $1$ and three terminal white vertices each of genus $0$.\\
(2) The $b12$-tree is the bipartite tree consisting of one black vertex incident to two edges one of label $1$, the other of label $2$, and two terminal white vertices each of genus $0$.
 \end{defn}
 \begin{figure}[ht]
\begin{center}
\includegraphics[width=2.5in]{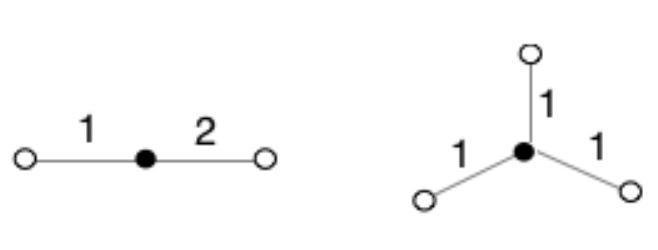}
\end{center}
\caption{\,$b12$-tree and $b111$-tree}
\end{figure}
First we consider special trivalent trees that do not contain $b111$-subtrees, which we call {\it barycentrically subdivided rooted $(2,1)$-labeled trees} or short {\it $(2,1)$-collapsible trees}:\\

A {\it barycentrically subdivided rooted $(2,1)$-labeled tree} is constructed from a rooted tree $T$ with root $r$ (a vertex of $T$) as follows: color with white and label $0$ the vertices of $T$,  take the barycentric subdivision $sd (T)$ of $T$ , color with black the new vertices (the barycenters of the edges of $T$) and finally label an edge $e$ of $sd (T)$ with $2$ (resp. $1$) if the distance from $e$ to the root $r$ is even (resp. odd). This labeled $sd (T)$ is the {\it $(2,1)$-collapsible tree} determined by $(T,r)$. We allow a one-vertex tree (with white vertex) as a $(2,1)$- collapsible tree.\\

A typical example is shown in Figure 3, where regions enclosed by the dashed curves are $(2,1)$-collapsible trees, bold white vertices are roots. 
 
\begin{lem}\label{12-components} Let $X_G$ be a trivalent $1$-connected $2$-stratifold. Then after removing the open stars of all black vertices of degree $3$, the components $C_1,\dots ,C_n$ are barycentrically subdivided rooted $(2,1)$-labeled trees. Furthermore, for each black vertex $b$ of degree $3$, at least one of its (three white) neighbors is the root of some $C_i$.
\end{lem}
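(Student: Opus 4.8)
The plan is to translate the topological hypothesis into a statement of linear algebra over $\mathbb{Z}_2$ and then read off the combinatorial structure. First I would record the standing structural facts. By Proposition \ref{simplyconnected}, $G=G_X$ is a tree, every white vertex has genus $0$, and every terminal vertex is white; by the definition of trivalence, each black vertex is therefore either a \emph{$b111$}-vertex (degree $3$, all edge-labels $1$) or a \emph{$b12$}-vertex (degree $2$, labels $1$ and $2$), the terminal label-$3$ case being excluded because terminal vertices are white. Thus the degree-$3$ black vertices are exactly the $b111$-vertices, and deleting their open stars removes each such vertex together with its three (label-$1$) open edges while retaining all white vertices and all $b12$-vertices. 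Each resulting component $C_i$ is then a subtree of $G$ in which every black vertex is a $b12$-vertex keeping both of its edges, and every white vertex has genus $0$. I would also reformulate the target: such a labeled tree is a $(2,1)$-collapsible tree precisely when exactly one of its white vertices is incident to no label-$1$ edge (this vertex being the root) and every other white vertex is incident to exactly one label-$1$ edge. This holds because at each $b12$-vertex the label-$2$ edge must point toward the root and the label-$1$ edge away from it, so the ``one label-$1$ edge per non-root white vertex'' condition is exactly the statement that following label-$1$ edges upward defines a consistent parent function, which reconstructs $(T,r)$ together with its distance-to-root labeling.

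The key tool, and the main obstacle, is an explicit description of $H_1(X;\mathbb{Z}_2)$; by Theorem \ref{pi=H} the hypothesis that $X$ is $1$-connected is equivalent to $H_1(X;\mathbb{Z}_2)=0$. I would obtain this via Mayer--Vietoris for $X=A\cup B$, where $A=\sqcup_i W_i$ is the union of the (genus-$0$, hence planar) white pieces and $B=\sqcup_j N(C_j)$ the union of the black regular neighborhoods, with $A\cap B\simeq\sqcup_{ij}S_{ij}$ one circle per edge of $G$. Here $H_1(N(C_j);\mathbb{Z}_2)=\mathbb{Z}_2\langle c_j\rangle$ with a boundary circle of label $k$ mapping to $(k\bmod 2)\,c_j$, and $H_1(W_i;\mathbb{Z}_2)=\mathbb{Z}_2^{\,\deg(w_i)-1}$ generated by its boundary circles subject only to the relation that their sum vanishes (this is where genus $0$ enters). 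Since $G$ is a tree the $H_0$-incidence map is injective, the connecting homomorphism vanishes, and $H_1(X;\mathbb{Z}_2)=\coker\phi$ with $\phi([S_{ij}])=(a_{ij},(k_{ij}\bmod 2)c_j)$. Eliminating the white generators through the edge relations $a_{ij}=(k_{ij}\bmod 2)c_j$ collapses this to the cokernel of the $\mathbb{Z}_2$-matrix $M$ whose columns are indexed by black vertices: the column of a $b12$-vertex is the basis vector $e_w$ of its label-$1$ white neighbor $w$, and the column of a $b111$-vertex is $e_{w_1}+e_{w_2}+e_{w_3}$ for its three white neighbors. Consequently $H_1(X;\mathbb{Z}_2)=0$ if and only if these columns are linearly independent over $\mathbb{Z}_2$.

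With this in hand the first assertion follows by counting. Linear independence forbids two $b12$-vertices from sharing a label-$1$ neighbor, since their columns would both equal $e_w$; hence within each $C_i$ every white vertex is incident to at most one label-$1$ edge. As $C_i$ is a tree in which every black vertex has degree $2$, a count of edge-endpoints gives $\#\{\text{black}\}=\#\{\text{white}\}-1$ in $C_i$, and since each $b12$-vertex contributes exactly one label-$1$ edge, the total number of label-$1$ edges of $C_i$ equals $\#\{\text{white}\}-1$. An at-most-one distribution summing to $\#\{\text{white}\}-1$ over $\#\{\text{white}\}$ vertices forces exactly one white vertex with no label-$1$ edge and all others with exactly one; by the reformulation, $C_i$ is $(2,1)$-collapsible with that vertex as its root.

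Finally, for the root-adjacency statement I would argue by contradiction. Fix a $b111$-vertex $b$ with white neighbors $w_1,w_2,w_3$ and suppose none is a root of its component; then each $w_\ell$ carries exactly one label-$1$ edge, to a $b12$-vertex $b_\ell$ with column $e_{w_\ell}$. The three $b_\ell$ are distinct (a $b12$-vertex has a single label-$1$ neighbor) and distinct from $b$, yet the columns of $b,b_1,b_2,b_3$ satisfy $(e_{w_1}+e_{w_2}+e_{w_3})+e_{w_1}+e_{w_2}+e_{w_3}=0$, contradicting linear independence. Hence at least one $w_\ell$ is a root. The only genuinely delicate point is the homology computation of the second paragraph; once $H_1(X;\mathbb{Z}_2)$ is identified with the column space of $M$, both conclusions reduce to short linear-algebra and tree-counting arguments.
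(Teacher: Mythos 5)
Your proof is correct, but it follows a genuinely different route from the paper's. The paper proves the first assertion by induction on the number of vertices, using the pruning construction (Remark~\ref{pruning}) to pass to subgraphs and quoting two results from \cite{GGH}: Theorem~3, that the linear graph $G(2,1,\dots,1,1,2,1,\dots,1,2)$ is not simply connected (to rule out two competing roots), and Lemma~4, that a tree all of whose terminal edges have label $2$ has nontrivial fundamental group (for the root-adjacency claim). You instead compute $H_1(X;\mathbb{Z}_2)$ once and for all via Mayer--Vietoris along the decomposition into white surface pieces and black regular neighborhoods, obtaining a presentation with the black vertices as generators and one relation per white vertex, so that $1$-connectedness forces the black columns of your matrix $M$ to be linearly independent over $\mathbb{Z}_2$; both assertions of the lemma then fall out as explicit dependency relations (two $b12$-vertices sharing a label-$1$ neighbor give equal columns; a $b111$-vertex none of whose neighbors is a root gives $\mathrm{col}(b)+\mathrm{col}(b_1)+\mathrm{col}(b_2)+\mathrm{col}(b_3)=0$), plus an edge count $\#\{\text{black}\}=\#\{\text{white}\}-1$ in each component and a correct (if tersely justified) reformulation of $(2,1)$-collapsibility as ``exactly one white vertex with no incident label-$1$ edge, all others with exactly one.'' One wording slip: $H_1(X;\mathbb{Z}_2)$ is the cokernel of the \emph{transposed} (white-relations) map, i.e.\ $\mathbb{Z}_2^{\,\text{black}}/\im M^{T}$, not $\coker M$ for $M\colon\mathbb{Z}_2^{\,\text{black}}\to\mathbb{Z}_2^{\,\text{white}}$; but the criterion you actually use --- vanishing iff the columns of $M$ are independent --- is the right one, so nothing breaks. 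The trade-off: the paper's argument stays inside its pruning/quotient technology and needs no homology computation beyond citing \cite{GGH}, while your argument is more self-contained (it uses only Proposition~\ref{simplyconnected} and the trivial direction of Theorem~\ref{pi=H}), replaces the two quoted model-graph results by uniform linear algebra, and yields as a by-product an effective mod-$2$ criterion that also illuminates the horned-tree obstruction and the paper's later Lemma~\ref{H(RG)}, whose proof begins from exactly the fact your Mayer--Vietoris argument establishes, namely that $H_1(X_G;\mathbb{Z}_2)$ is generated by the black vertices.
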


\begin{figure}[ht]
\begin{center}
\includegraphics[width=4.5in]{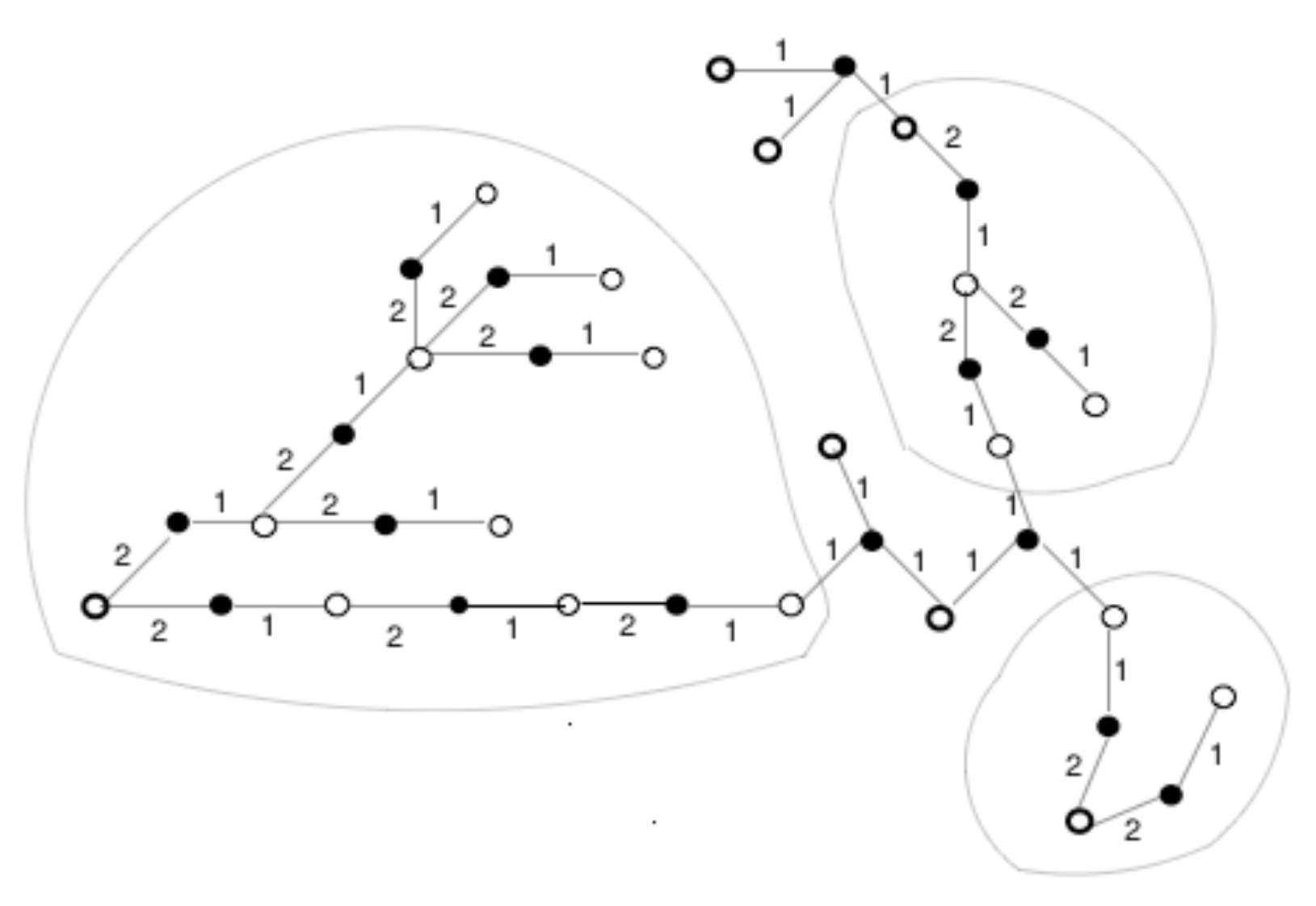}
\caption{}
\end{center}
\end{figure}

\begin{proof} Note that each $C_i$ is a pruned subgraph of $G$ and so $\pi(X_{C_i})=1$. Therefore for the first part of the proposition it suffices to show that if $C$ has no black vertices of degree $3$ and $X_C$ is simply connected, then $C$ is a barycentrically subdivided rooted $(2,1)$-labeled tree. 

By Proposition \ref{simplyconnected}, $C$ is a tree with all white vertices of genus $0$ and all terminal vertices white and, if $C$ is not a vertex,  contains a $b12$-subgraph $L$ with terminal edge of label $1$. Let $w$ be the white vertex of $L$ which is not a terminal vertex of $C$. Let $G_0 = (C-L)\cup \{w\}$, let $G_i' $ ($i=1,\dots ,m$) be the components of $G_0 -\{w\}$ and $G_i =G_i' \cup \{w\}$. Then $X_{G_i}$ is simply connected for $i=0,\dots ,m$. 
By induction on the number of vertices, each $G_i$ is a barycentrically subdivided rooted $(2,1)$-labeled tree with a root $r_i$. 

If $r_i =w$ for each $i=1,\dots ,m$ then $C$ is a barycentrically subdivided rooted $(2,1)$-labeled tree with root $w$. 

If $r_i \neq w$ the label on the edge $e_i$ of $G_i$ incident to $w$ is $1$. It follows that there is at most one $r_i$ not equal to $w$. For otherwise, if $r_i \neq w \neq r_j $ the unioin of the edges and vertices of the simple path in $G_i \cup G_j$ from $r_i$ to $r_j$ is a pruned linear subgraph $\Gamma=G(2,1,\dots , 1,1,2,1,\dots ,1,2)$ of $C$
for which $X_{\Gamma}$ is not simply connected by Theorem 3 of \cite{GGH}. This is a contradiction since $\pi_1 (X_{\Gamma})$ is a quotient of $\pi_1 (X_C )$. It follows that if $r_i \neq w$ then $C$ is a barycentrically subdivided rooted $(2,1)$-labeled tree with root $r_i$. 

For the second part of the proposition, suppose that $G$ contains a $b111$-subgraph with black vertex $b$ such that none of its white vertices $w_1 ,w_2 ,w_3$ is a root of any of the $C_j$ and let $C_i$ be the pruned subgraph of $G$ containing $w_i$ ($i=1,2,3$) and with roots $r_i \neq w_i$. Then the pruned subgraph $\Gamma$ of $G$ which is the union of the edges and vertices of the three simple paths in $G$ from the roots $r_i$ to $b$ ($i=1,2,3$) has all terminal edges of label $2$. By Lemma 4 of \cite{GGH}, $\pi_1 (X_{\Gamma})\neq 1$, a contradiction.
\end{proof}

The figure below shows that the converse of Lemma \ref{12-components} is false. There are two $b111$-vertices, all labels are $1$ except as indicated, and $H_1 (X_G ;\mathbb{Z}_2 )\cong \mathbb{Z}_2{\times}\mathbb{Z}_2$. Deleting the two $b12$-trees at the white center vertex (but not the center vertex) yields a {\it horned tree}.

\begin{figure}[ht]
\begin{center}
\includegraphics[width=4in]{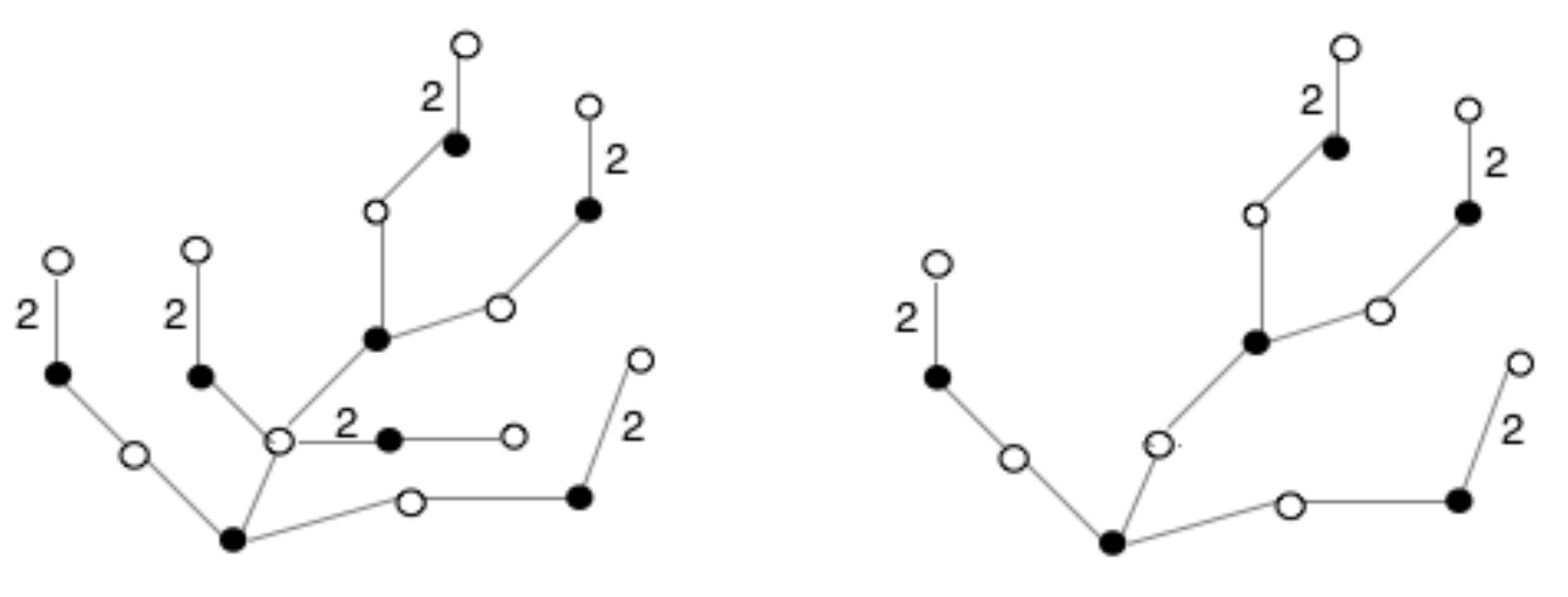}
\vspace{.1in}
\caption{\hspace{1in	}horned tree}
\end{center}
\end{figure}

\begin{defn} A horned tree $H_T$  is a finite connected bipartite labeled tree such that\\
(1) every black vertex $b$ whose distance to a terminal white vertex is $1$ has degree $2$; otherwise $b$ has degree 3;\\
(2) every nonterminal white vertex has degree $2$;\\
(3) every terminal edge has label $2$, every nonterminal edge has label $1$.\\
(4) there is at least one vertex of degree $3$.
\end{defn}
 A horned tree $H_T$ may be constructed from a tree $T$ all of whose nonterminal vertices have degree $3$, with at least one such vertex, as follows:
 
Color a vertex of $T$ white (resp. black) if it has degree $1$ (resp. $3$). Trisect the terminal edges of $T$ and bisect  the nonterminal edges, obtaining the graph $H_T$.
Color the additional vertices $v$ so that $H_T$ is bipartite, that is, $v$ is colored black if $v$ is a neighbor of a terminal vertex of $H_T$ and white otherwise. Then label the edges so that (3) holds.\\

The main property of $H_T$ is that $\pi_1 (X_{H_T}) = \mathbb{Z}_2$.\\

Finally we consider a ``reduced graph" of $G$ which encodes information on how the $(2,1)$-collapsible trees of Lemma \ref{12-components} are attached to the stars of the black vertices of degree $3$.\\

Denote by $B$ the union of all the black vertices of degree $3$ of $G$, let $St(B)$ be the (closed) star of $B$ in $G$ and $st(B)$ be the the open star of $B$.  Note that $G-st(B)$ consists of the components $C_1,\dots ,C_n$ as in Lemma \ref{12-components}.

\begin{defn} Let $G$ be a bipartite labeled tree such the the components of $G -st(B)$ are barycentrically subdivided rooted $(2,1)$-labeled trees. The reduced subgraph $R(G)$ of $G$ is the graph obtained from $St(B)$ by attaching to each white vertex $w$ of $St(B)$ that is not a root, a $b12$-graph such that the terminal edge has label $2$.
\end{defn}

As an example the reduced graph $R(G)$ for the graph in Figure 3 is shown below.
\begin{figure}[ht]
\begin{center}
\includegraphics[width=2in]{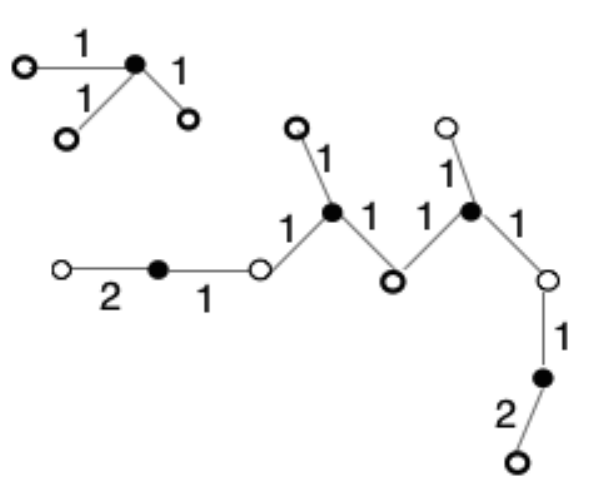}
\caption{}
\end{center}
\end{figure}

\begin{lem} \label{H(RG)} $H_1 (X_G ,\mathbb{Z}_2 )\approx H_1 (X_{R(G)} ,\mathbb{Z}_2 )$.
\end{lem}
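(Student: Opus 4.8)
The plan is to reduce the statement to a purely combinatorial identity by first giving a closed form for $H_1(X_G;\mathbb{Z}_2)$ in terms of the labelled graph and then comparing the forms for $G$ and $R(G)$. First I would compute $H_1(X_G;\mathbb{Z}_2)$ with a Mayer--Vietoris argument for the decomposition of $X_G$ into the planar pieces $W_i$ (the genus-$0$ surfaces over the white vertices) and the regular neighbourhoods $N(C_j)$ (over the black vertices), glued along the circles $S_e$, one per edge $e$. Writing $\phi\colon\bigoplus_e H_1(S_e)\to\bigoplus_i H_1(W_i)\oplus\bigoplus_j H_1(N(C_j))$ for the first Mayer--Vietoris map, the two ingredients are: $H_1(N(C_j);\mathbb{Z}_2)=\mathbb{Z}_2\langle c_j\rangle$ with a boundary circle of label $k$ mapping to $(k\bmod 2)\,c_j$ (so a label-$2$ curve is null-homologous mod $2$, while a label-$1$ curve maps to $c_j$); and, since every white vertex has genus $0$, $H_1(W_i;\mathbb{Z}_2)=\langle[\partial_e]:e\ni w_i\rangle/\langle\sum_{e\ni w_i}[\partial_e]\rangle$, a planar surface with the single relation that its boundary circles sum to zero. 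As $G$ is a tree the $H_0$-part of the sequence is injective, so the second map is onto and $H_1(X_G;\mathbb{Z}_2)=\coker(\phi)$. Eliminating the boundary generators via $[\partial_e]=(k_e\bmod 2)c_{j(e)}$ then gives the clean presentation
$$H_1(X_G;\mathbb{Z}_2)=\big\langle\,c_b:\ b\ \text{black}\,\big\rangle\Big/\big\langle\, r(w):=\textstyle\sum_{b}(\text{parity of label of }wb)\,c_b \ :\ w\ \text{white}\,\big\rangle,$$
i.e.\ the cokernel of the mod-$2$ incidence matrix of the subgraph of odd-label edges.

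Next I would reduce this presentation using the $(2,1)$-structure. Every black vertex $\beta$ interior to a $(2,1)$-collapsible tree (and every black vertex of an attached $b12$-graph in $R(G)$) is a $b12$-vertex with exactly one incident edge of label $1$, running to the white neighbour $w(\beta)$ farther from the root; its other edge has label $2$ and is invisible mod $2$. Hence $c_\beta$ occurs in the single relation $r(w(\beta))$ and no other, so $c_\beta$ may be eliminated together with $r(w(\beta))$ without disturbing any remaining generator or relation. The assignment $\beta\mapsto w(\beta)$ is a bijection from the interior black vertices of a collapsible piece onto its non-root white vertices, so all interior black generators cancel against precisely the non-root white relations. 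The surviving presentation has generators $\{c_b:b\in B\}$ (the degree-$3$ vertices) and relations $r(\rho)$ only for the roots $\rho$; moreover $r(\rho)=\sum_{b\in B,\,b\sim\rho}c_b$, since a root meets its own collapsible tree only through label-$2$ edges and meets $B$ only through label-$1$ edges.

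Finally I would observe that this reduced presentation is manifestly the same for $G$ and for $R(G)$. The generating set $\{c_b:b\in B\}$ is unchanged, because $R(G)$ is built from $St(B)$ and only adds $b12$-graphs, whose black vertices have degree $2$. The nontrivial relations come only from the roots adjacent to $B$, that is, from the white neighbours of $B$ that are roots (roots lying deep inside a collapsible tree, and the terminal roots $\ell$ of the added $b12$-graphs, contribute the empty relation). These root-attaching white vertices, together with their label-$1$ adjacencies to $B$, all lie in $St(B)$ and are literally identical in $G$ and $R(G)$; hence the two presentations coincide and $H_1(X_G;\mathbb{Z}_2)\approx H_1(X_{R(G)};\mathbb{Z}_2)$.

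The main obstacle I expect is the bookkeeping in the middle step: one must verify that the label-$2$ edges really do make each interior black generator appear in a unique white relation, and that $\beta\mapsto w(\beta)$ is a bijection onto the non-root whites, while handling the degenerate cases (one-vertex collapsible trees with no interior black vertices, roots not adjacent to $B$, and white vertices adjacent to several members of $B$). The homological input of the first step---that a label-$2$ boundary curve vanishes mod $2$ in $N(C)$---is the feature of the $(2,1)$-labelling that drives the whole argument and should be isolated and proved with care.
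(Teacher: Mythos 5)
Your proof is correct, and it takes a genuinely different route from the paper's. The paper asserts at the outset (from its earlier results) that $H_1(X_G;\mathbb{Z}_2)$ is generated by the black vertices, and then transforms $G$ into $R(G)$ by a sequence of moves that each visibly preserve mod-$2$ homology: splitting a white vertex along its label-$2$ edges, deleting terminal $b12$-subgraphs of the collapsible components, and discarding the resulting linear components with edge labels $2\text{-}1\text{-}\dots\text{-}2\text{-}1$, which are homologically trivial. You instead make everything explicit: your Mayer--Vietoris step (valid in the lemma's setting, since Definition 3 makes $G$ a tree, so the $H_0$-term is injective, and the $(2,1)$-collapsible components force all white vertices to have genus $0$) identifies $H_1(X_G;\mathbb{Z}_2)$ with the cokernel of the mod-$2$ incidence matrix of the odd-label subgraph, and you then reduce the presentations for $G$ and for $R(G)$ to a literally identical normal form. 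Your parity analysis is right: in a barycentrically subdivided rooted $(2,1)$-labeled tree the label-$1$ edge at each barycenter points away from the root, so $\beta\mapsto w(\beta)$ is the standard bijection from edges of the underlying rooted tree to their child vertices, and the pivot eliminations are legitimate because each such column $c_\beta$ has a single nonzero entry, so deleting its row and column leaves all remaining rows and columns untouched; the degenerate cases you flag (one-vertex components, roots not adjacent to $B$, the terminal whites of the attached $b12$-graphs) all contribute empty relations on both sides, as you note. In effect your argument proves more than the lemma: both groups are computed as the cokernel of the roots-versus-$B$ adjacency matrix, which is an explicit formula the paper never states. What the paper's route buys is brevity (it leans on the cited generation fact and on pruning) and consistency with the graph-modification language used throughout; what yours buys is a self-contained homological foundation, with the key input --- that a label-$2$ boundary curve is null-homologous mod $2$ in $N(C)$ --- isolated exactly where you said it should be.
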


\begin{proof} $H_1 (X_G ,\mathbb{Z}_2 )$ is generated by the black vertices. Let $w$ be a white vertex of $G$ of degree $n\geq 2$  with incident edges $e_1, \dots, e_n$. Suppose the label on $e_i$ is $2$ for $i=1, \dots, k$  ($k\leq n$). Split $w$ into $k+1$ (disjoint) vertices $w_1, \dots, w_k, w'$ so that each $w_k$ has degree $1$ with adjacent edge $e_k$ and $w'$ has degree $n-k$ with adjacent edges $e_{k+1}, \dots, e_n$. This change of the graph does not change $H_1 (X_G ,\mathbb{Z}_2 )$.

Now let C be a $(2,1)$-collapsible component of $G-st(B)$. If $w$ is a terminal (white) vertex of $C$ that is also a terminal vertex of $G$, delete the $b12$-subgraph of $C$ that contains $w$, if there is one ($C$ might consist of a single vertex). Continue doing this operation until all terminal vertices of $C$ belong to $St(B)$. This does not change $\pi_1 (X_G )$. If $w$ is a non-terminal white vertex of (the new $C$) then, if $w$ is the root, all edges incident to $w$ have label $2$; if $w$ is not the root,  all edges but one incident to $w$ have label $2$. Do the above construction on each such $w$ to change $C$ to $C'$. Then each component of $C'$ that does not contain a terminal white vertex of $C$ is a linear graph of type $w_1 - b_1 - \dots - w_k - b_k$ with successive edge labels $2-1-\dots-2-1$ (or consists of a single white vertex). Deleting these homologically trivial components from $C'$ does not change $H_1 (X_G ,\mathbb{Z}_2 )$.

Doing this for all $(2,1)$-collapsible components $C$ of $G-st(B)$ results in $R(G)$.
 \end{proof}

We now state the Classification Theorem.

\begin{thm} Let $X_G$ be a trivalent connected 2-stratifold. The following are equivalent :\\
(1) $X_G$ is 1-connected\\
(2) $G_X$ is a tree with all white vertices of genus $0$ and all terminal vertices white such that the components of $G -st(B)$ are barycentrically subdivided rooted $(2,1)$-labeled trees and the reduced graph $R(G)$ contains no horned tree. 
\end{thm}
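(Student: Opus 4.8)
The plan is to prove the equivalence by reducing everything, via Theorem \ref{pi=H} together with Lemmas \ref{12-components} and \ref{H(RG)}, to a single combinatorial statement about the reduced graph: that $H_1(X_{R(G)};\mathbb{Z}_2)=0$ if and only if $R(G)$ contains no horned tree as a pruned subgraph. The first clauses of (2)---tree, genus-$0$ white vertices, terminal vertices white, and $(2,1)$-collapsible components of $G-st(B)$---are exactly what Proposition \ref{simplyconnected} and Lemma \ref{12-components} deliver when $X_G$ is $1$-connected, and they are also precisely the hypotheses under which $R(G)$ is defined and under which Theorem \ref{pi=H} applies. So the real content is the no-horned-tree clause, and it is governed by $\mathbb{Z}_2$-homology.

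For $(1)\Rightarrow(2)$ the structural conditions come from Proposition \ref{simplyconnected} and Lemma \ref{12-components}, so $R(G)$ is defined and Lemma \ref{H(RG)} gives $H_1(X_{R(G)};\mathbb{Z}_2)\cong H_1(X_G;\mathbb{Z}_2)=0$. If $R(G)$ contained a horned tree $H_T$ as a pruned subgraph, then $\pi_1(X_{H_T})=\mathbb{Z}_2$ would give $H_1(X_{H_T};\mathbb{Z}_2)=\mathbb{Z}_2$, and Remark \ref{pruning} would produce a surjection $H_1(X_{R(G)};\mathbb{Z}_2)\twoheadrightarrow H_1(X_{H_T};\mathbb{Z}_2)\neq 0$, a contradiction. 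This direction is routine.

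For $(2)\Rightarrow(1)$ the structural conditions permit me to invoke Theorem \ref{pi=H} and Lemma \ref{H(RG)}, so it suffices to show that the absence of a horned tree forces $H_1(X_{R(G)};\mathbb{Z}_2)=0$, and I argue the contrapositive. First I record the combinatorial description of $\mathbb{Z}_2$-homology that is implicit in the proof of Lemma \ref{H(RG)} and derivable by Mayer--Vietoris from $X=M\cup\bigcup_j N(C_j)$: for a pruned subgraph $P$, the group $H_1(X_P;\mathbb{Z}_2)$ is the cokernel of $\partial\colon \mathbb{Z}_2^{\{\text{white}\}}\to \mathbb{Z}_2^{\{\text{black}\}}$ sending a genus-$0$ white vertex $w$ to the sum of the black endpoints of its odd-labelled (here label-$1$) incident edges. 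Assuming $H_1(X_{R(G)};\mathbb{Z}_2)\neq 0$, I choose a pruned subgraph $P\subseteq R(G)$ with $H_1(X_P;\mathbb{Z}_2)\neq 0$ minimal in its number of black vertices (and, passing to a component, connected), and claim $P$ is itself a horned tree; since such a $P$ is exactly what ``$R(G)$ contains a horned tree'' means, this completes the proof.

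The core of the argument---and the step I expect to be the main obstacle---is verifying that this minimal $P$ satisfies the four horned-tree axioms. The mechanism is threefold. \emph{(i) Minimality forces an all-ones witness.} A nonzero functional $\phi$ on $\mathbb{Z}_2^{\{\text{black}\}}$ vanishing on every relation $\rho_w$ represents a nonzero class; if such a $\phi$ vanished at some black vertex $\beta$, then restricting $\phi$ to $P-\beta$ would show $H_1(X_{P-\beta};\mathbb{Z}_2)\neq 0$, contradicting minimality. Hence this space of functionals is one-dimensional, generated by the all-ones functional, which forces every non-tip white vertex to have even degree and excludes terminal black vertices. \emph{(ii) Acyclicity of $R(G)$ forces degree $2$.} Since $P$ is a subtree of $R(G)$, deleting a non-tip white vertex $w$ of degree $d$ splits $P$ into $d$ branches; if $d\ge 4$ the functional equal to $1$ on two branches and $0$ elsewhere is a second, distinct witness, contradicting one-dimensionality, so $d=2$. \emph{(iii) Terminal edges carry label $2$.} A terminal white vertex joined to a black vertex $\beta$ by a label-$1$ edge would give the relation $\rho_w=\beta$, again allowing $\beta$ to be deleted while preserving nonzero homology; so every terminal white vertex meets only a horn, along a label-$2$ edge. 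Finally, a graph all of whose black vertices have degree $2$ (labels $1,2$) is $(2,1)$-collapsible and hence $\mathbb{Z}_2$-acyclic, so $P$ must contain a degree-$3$ black vertex. Together these give axioms (1)--(4). I anticipate the delicate points to be stating the cokernel formula cleanly for pruned subgraphs and managing the bookkeeping of roots, horns, and the branch-splitting witness in (ii), where the tree structure of $R(G)$ is indispensable.
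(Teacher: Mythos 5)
Your overall strategy is sound and genuinely different from the paper's: for the hard direction the paper runs an induction on the number of degree-$3$ black vertices of a component $C$ of $R(G)$, peeling off the star of a black vertex with a terminal label-$1$ edge, and, when every terminal label is $2$, extracting a horned tree by repeatedly deleting all but two branches at white vertices of degree $\geq 3$. Your contrapositive via a minimal pruned subgraph $P$ with $H_1(X_P;\mathbb{Z}_2)\neq 0$, analyzed through dual $\mathbb{Z}_2$-witnesses, is a legitimate alternative, and your steps (i)--(iii) are correct: minimality does force every nonzero witness to be nowhere zero, hence equal to the all-ones functional, which yields even label-$1$-degree at every white vertex; the branch-splitting argument does force nonterminal white vertices to have degree $2$; and pruned subgraphs of $R(G)$ automatically have no terminal black vertices since black vertices retain their full degree. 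In fact, your argument, once complete, substantiates the extraction step that the paper asserts rather tersely (``We can find a sequence $C=C_0,\dots,C_m$\dots'').

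The genuine gap is your justification of axiom (4). The claim that ``a graph all of whose black vertices have degree $2$ (labels $1,2$) is $(2,1)$-collapsible and hence $\mathbb{Z}_2$-acyclic'' is false: the linear graph $G(2,1,1,2)$ (tip--black--white--black--tip) has all black vertices of degree $2$ with labels $1$ and $2$, satisfies every condition you derive in (i)--(iii) (its middle white vertex has even label-$1$-degree $2$, nonterminal whites have degree $2$, terminal edges have label $2$), is not a horned tree, and yet $\pi_1=H_1(\,\cdot\,;\mathbb{Z}_2)=\mathbb{Z}_2$; it is also not $(2,1)$-collapsible, since in a barycentrically subdivided rooted $(2,1)$-labeled tree every white vertex is incident to at most one label-$1$ edge. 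So your abstract conditions alone do not exclude a minimal $P$ with no degree-$3$ black vertex, and the step as written would fail. The repair must invoke the specific structure of $R(G)$: label-$2$ edges occur only on the attached $b12$-horns, each white vertex of $St(B)$ has at most one attached horn, and any path in $R(G)$ between two horn black vertices passes through a vertex of $B$. Hence a connected pruned $P\subseteq R(G)$ containing no degree-$3$ black vertex is a single white vertex or a single horn $t-\beta-w$ with labels $2,1$; in the latter case $w$ has odd label-$1$-degree $1$, contradicting the all-ones witness (equivalently, the relation $\rho_w=\beta$ kills the homology). With that one structural observation inserted, your proof closes and establishes the same equivalence as the paper's.
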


\begin{proof}  If $X_G$ is 1-connected then by Proposition \ref{simplyconnected} and Lemma \ref{12-components} 
the components of $G - st(B)$ are barycentrically subdivided rooted $(2,1)$-labeled trees.
If the reduced graph $R(G)$ contains a horned tree $H$ let $C$ be the component  of $R(G)$ containing $H$. Note that $H$ is a pruned subgraph of $C$ and since $\pi_1 (H) = \mathbb{Z}_2$, it follows from Remark \ref{pruning} that $H_1 (C; \mathbb{Z}_2)\neq 0$. Lemma \ref{H(RG)} then shows that $H_1 (X_G ; \mathbb{Z}_2) \cong H_1 (X_{R(G)}; \mathbb{Z}_2 )\neq 0$. 
Hence $R(G)$ does not contain a horned tree.\\
   
Conversely, suppose the components of $G - st(B)$ are barycentrically subdivided rooted $(2,1)$-labeled trees and $R(G)$ contains no horned trees. Let $C$ be a component of $R(G)$. \\

First we show by induction on $n:=$ number of black vertices of degree $3$ in $C$, that $H_1 (X_C ;\mathbb{Z}_2 ) = 0$. \\
If $n=1$, then $C$ is a $b111$-tree with at most two $b21$-trees attached to its terminal vertices, and so 
$H_1 (X_C ; \mathbb{Z}_2 ) = 0$.\\
Let $n>1$. We claim that at least one terminal label of $C$ is $1$.\\
If not, then $C$ satisfies conditions (1) and (3)  of the definition of horned tree.
We can find a sequence $C=C_0, C_1, \dots,C_m $, where $C_m$ is a horned tree and $C_{i+1}$ is obtained from $C_i$ by deleting all but two components of $C_i  -  \{w\}$ for some nonterminal white vertex $w$ of $C_i$, contradicting the assumption that $R(G)$ contains no horned trees. This proves the claim.\\
 Now let $C' = C -st(b)$ where $b - w$ is a terminal edge of $C$ with label $1$. Then 
$H_1 (C; \mathbb{Z}_2 ) = H_1 (X_{C'}; \mathbb{Z}_2)$ which by induction is $0$. 

 Therefore $H_1 (X_G,\mathbb{Z}_2 ) = H_1 (X_{R(G)}; \mathbb{Z}_2 ) = 0$ and it follows from Theorem \ref{pi=H} that $X_G$ is $1$-connected.\\
\end{proof}
%%%%%%%%%%%%%%%%%%%%%%%%%%%%%%%%%
\section{Constructing trivalent graphs with all edge labels $1$.}

On a labeled graph $\Gamma=\Gamma_X$ with all white vertices of genus $0$ consider the following operation $O1$ that changes $\Gamma=\Gamma_X$ to $\Gamma_1 =\Gamma_{X_1}$:

\begin{figure}[ht]
\begin{center}
\includegraphics[width=3in]{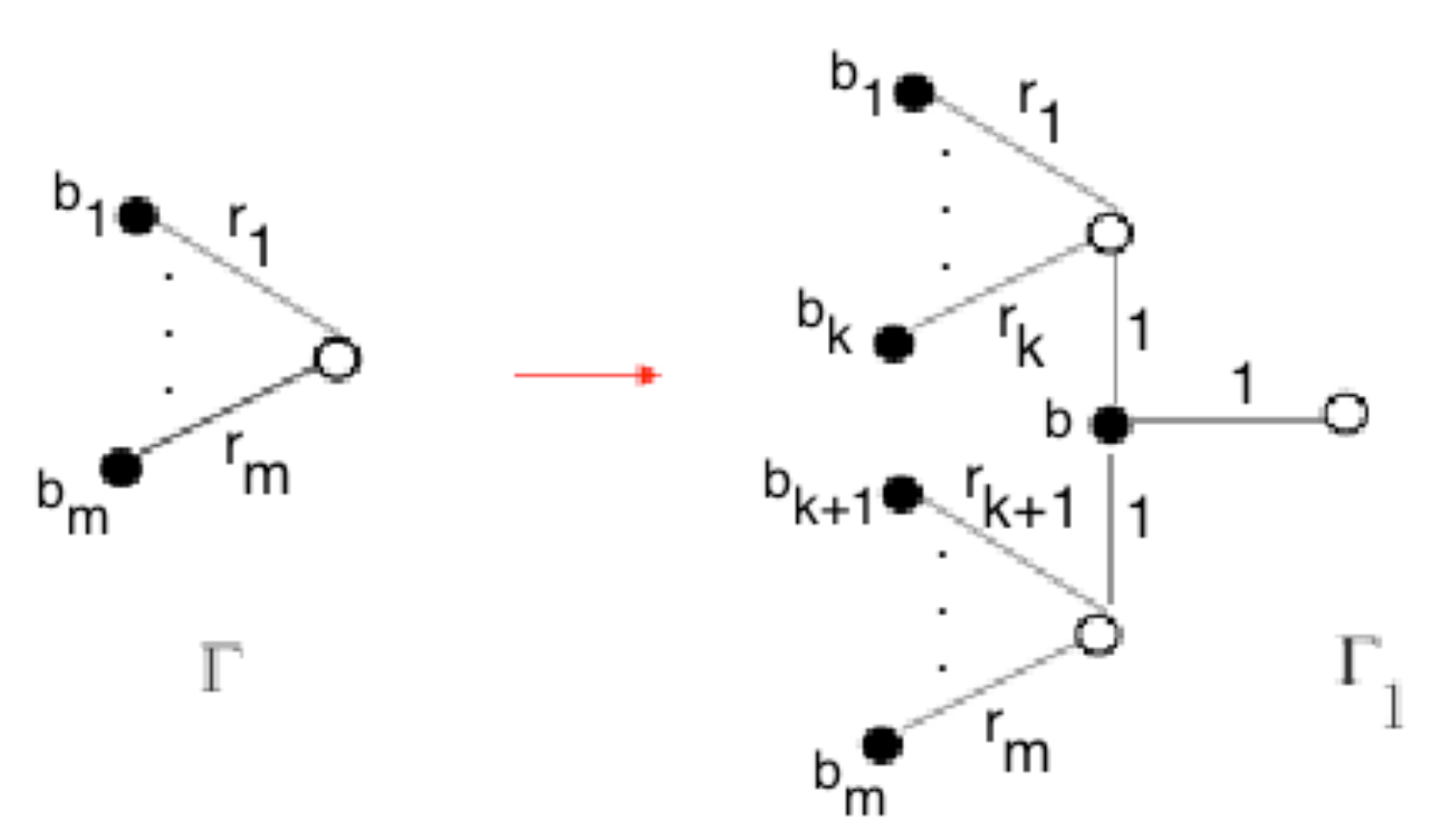} \qquad \qquad $m\geq k \geq 0$
\end{center}
\caption{\,Operation $O1$}
\end{figure}

If $b_i$ is the black vertex incident to the edge labeled $r_i$ then the corresponding relation $b_1^{r_1}\dots b_m^{r_m}=1$ in $\pi_1 (X)$ is changed to the relations $b_1^{r_1}\dots b_k^{r_k} b=1$, $b=1$,  $b_{k+1}^{r_{k+1}}\dots b_m^{r_m}=1$ in $\pi_1 (X_1 )$ and it follows that $\pi_1 (X_1 )$ is a quotient of $\pi_1 (X)$. In particular we note:
\begin{rem}\label{remop1} If $X$ is simpy connected, then operation $O1$ does not change the fundamental group.\end{rem}

\begin{lem}\label{op1} Let $X$ be a trivalent $2$-stratifold such that $\Gamma_X$ is a tree with all white vertices of label $0$, all terminal vertices white, and all edge labels $1$. Then $\Gamma_X$ can be reconstructed from any white vertex $w$ of $\Gamma_X$ by successively performing $O1$.
\end{lem}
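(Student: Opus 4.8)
The plan is to argue by induction on the number $B$ of black vertices of $\Gamma_X$, running the operation $O1$ backwards. First I would record the combinatorial consequence of the hypotheses: since $X$ is trivalent, all terminal vertices are white, and every edge carries the label $1$, no black vertex can be terminal (that would force a label-$3$ edge) and none can carry a label-$2$ edge; hence every black vertex is of type $b111$, i.e. has degree exactly $3$. Together with the tree and bipartite conditions this yields the clean model I want to work in: $\Gamma_X$ is a bipartite tree whose black vertices all have degree $3$, whose white vertices have genus $0$, and whose leaves are all white. Counting edges via $3B = B + W - 1$ gives $W = 2B+1$, and one checks that a single application of $O1$ to a graph of this type (introducing one new $b111$-vertex together with a pendant white leaf) again produces such a graph, raising $B$ by $1$ and $W$ by $2$; starting from a single white vertex, the first application of $O1$ (with $m=k=0$) produces exactly the $b111$-tree.

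The core of the argument is the inverse move $O1^{-1}$. Because the graph is bipartite and all leaves are white, the unique neighbor of every leaf is a black vertex; hence whenever $B \geq 1$ there is a degree-$3$ black vertex $b$ adjacent to a leaf $w_0$. I would define $O1^{-1}$ at $b$ to delete $w_0$ and $b$ and to identify the remaining two white neighbors of $b$ into a single genus-$0$ white vertex. Since removing $b$ separates the tree into the three subtrees hanging off its neighbors, one of which is the single vertex $w_0$, this identification reconnects the other two subtrees into a tree; all edge labels stay $1$, all black vertices keep degree $3$, and the only vertex whose degree changes is the merged white vertex, so leaves remain white. Thus $O1^{-1}$ produces a strictly smaller graph of the same type, and it is literally undone by one application of $O1$.

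To obtain the statement for a prescribed white vertex $w$, I would choose $b$ so that the deleted leaf $w_0$ is different from $w$. This is always possible: a tree with $B \geq 1$ has $W = 2B+1 \geq 3$ vertices and hence at least two leaves, so some leaf $w_0 \neq w$ exists; taking $b$ to be its black neighbor, the move either leaves $w$ untouched or merges $w$ with another white vertex, and in the latter case the merged vertex is named $w$. Hence $w$ survives as a white vertex of the smaller graph. That smaller graph has $B-1$ black vertices and still satisfies all the hypotheses, so (being a labeled tree, which by the remarks of Section 2 determines its stratifold uniquely) it is $\Gamma_{X'}$ for a trivalent stratifold $X'$ to which the inductive hypothesis applies; by it, $\Gamma_{X'}$ can be built from $w$ by successive $O1$ moves, and appending the single $O1$ reversing our $O1^{-1}$ reconstructs $\Gamma_X$ from $w$. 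The base case $B=0$ is the single white vertex itself.

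I expect the only real subtlety to be the bookkeeping in the inductive step: verifying that $O1^{-1}$ keeps the graph inside the class (tree, genus-$0$ white vertices, white leaves, all labels $1$, black vertices of degree $3$) while simultaneously preserving the distinguished vertex $w$. The genus-$0$ claim for the merged white vertex is immediate combinatorially, since $O1$ is defined to split a genus-$0$ white vertex into two genus-$0$ white vertices, so its inverse restores a genus-$0$ vertex; and the existence of a safe choice of leaf rests only on the elementary fact that such a tree has at least two leaves.
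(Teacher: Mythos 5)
Your proof is correct, but it runs the induction in the opposite direction from the paper's. The paper also inducts on the number of black vertices, but it works outward from the prescribed vertex: it picks a black vertex $b$ adjacent to $w$, deletes $b$ with its three edges to split $\Gamma_X$ into three subtrees $\Gamma'\ni w$, $\Gamma''$, $\Gamma'''$, builds $\Gamma'$ from $w$ by induction, reinserts $b$ by one $O1$ at $w$ (the case $k=m$, which creates the two new white vertices $w''$ and $w'''$), and then engulfs $\Gamma''$ and $\Gamma'''$ by further $O1$'s starting at $w''$ and $w'''$ --- so it makes three inductive calls and reassembles. You instead work inward from a leaf far from $w$: you make the inverse move $O1^{-1}$ explicit, peel off a leaf $w_0\neq w$ together with its black neighbor, merge the two remaining white neighbors, and make a single inductive call on the smaller graph. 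Your route makes explicit structural facts the paper leaves tacit (every black vertex is of type $b111$, $W=2B+1$, hence at least two leaves exist and a leaf avoiding $w$ can always be chosen), and it sidesteps a bookkeeping point the paper glosses over: when the paper engulfs $\Gamma''$ from $w''$ inside the larger graph, the $O1$'s supplied by the inductive hypothesis are applied to vertices carrying one extra edge (toward $b$) relative to the abstract subtree $\Gamma''$, so one should note those moves remain available; your version never re-embeds a subinduction into a bigger ambient graph. Conversely, the paper's version needs no inverse operation and no renaming of the distinguished vertex, since $w$ stays fixed in $\Gamma'$ throughout, whereas your one delicate step --- choosing $w_0\neq w$ and naming the merged vertex $w$ when $w$ happens to be a neighbor of the deleted black vertex --- is exactly the point to get right, and you handle it correctly.
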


\begin{proof} If $\Gamma=\Gamma_X$ consists of $w$ only, there is nothing to show. Clearly the Lemma is true if $\Gamma_X$ has only one black vertex.

Let $b$ be a black vertex incident to $w$. Deleting $b$ and its three incident edges from $\Gamma$, we obtain three subtrees $\Gamma'$, $\Gamma''$, $\Gamma'''$ that satisfy the conditions of the Lemma and with fewer black vertices than $\Gamma$. Denote by $w'$, $w''$, $w'''$ the three white vertices adjacent to $b$, where $w'=w\in \Gamma'$, $w''\in \Gamma''$ and $w'''\in \Gamma'''$. By induction on the number of black vertices, $\Gamma'$ is obtained from $w$ by repeated applications of operation $O1$.  Now apply $O1$ to $w\in \Gamma'$ to put back $b$ with its three edges and vertices $w''$ and $w'''$, then apply a sequence of $O1$'s to $w''$ and to $w'''$ to engulf $\Gamma''$ and $ \Gamma'''$.
\end{proof}

\begin{thm}\label{111model} Let $X$ be a trivalent $2$-stratifold such that each edge of $\Gamma_X$ has label $1$. Then the following are equivalent:\\
(1) $\pi_1 (X)=1$.\\
(2) $\Gamma_X$ is a tree with all white vertices of label $0$ and all terminal vertices white.\\
(3) $\Gamma_X$ can be constructed from the $b111$-graph by successively performing operation $O1$.
\end{thm}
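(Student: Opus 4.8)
The plan is to prove the cycle of implications $(1)\Rightarrow(2)\Rightarrow(3)\Rightarrow(1)$, using the standing hypothesis that every edge of $\Gamma_X$ carries label $1$. I would first record a structural consequence of this hypothesis: since $X$ is trivalent, each black vertex is incident either to three label-$1$ edges, or to one label-$1$ and one label-$2$ edge, or is terminal with a label-$3$ edge. The latter two cases require an edge of label $2$ or $3$, which are excluded, so every black vertex has degree exactly $3$, i.e.\ is a $b111$-vertex. This observation is what makes the $b111$-graph the natural building block here.

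The implication $(1)\Rightarrow(2)$ is immediate from Proposition \ref{simplyconnected}: if $X$ is simply connected, then $G_X$ is a tree with all white vertices of genus (label) $0$ and all terminal vertices white, which is exactly condition $(2)$.

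For $(2)\Rightarrow(3)$ I would invoke Lemma \ref{op1}, whose hypotheses are precisely condition $(2)$ together with the standing assumption that all edge labels are $1$. That lemma reconstructs $\Gamma_X$ from a single white vertex $w$ by successive applications of $O1$. The only point needing care is to reconcile ``from a white vertex'' with ``from the $b111$-graph'': I observe that applying $O1$ with $k=m=0$ to a lone white vertex introduces a single $b111$-vertex together with a new white leaf, hence produces exactly the $b111$-graph. Consequently, provided $\Gamma_X$ has at least one black vertex, the reconstruction supplied by Lemma \ref{op1} may be read as a construction beginning from the $b111$-graph; if instead $\Gamma_X$ is a single white vertex, then $X\cong S^2$ and the statement is degenerate.

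For $(3)\Rightarrow(1)$ I would first verify that $\pi_1(X_{b111})=1$. This follows from the presentation read off the graph, since each of the three terminal white vertices contributes the relation $b=1$ on the single generator $b$; alternatively it follows from Theorem \ref{pi=H} together with $H_1(X_{b111};\mathbb{Z}_2)=0$. Then, using the fact established just before Remark \ref{remop1} that each operation $O1$ replaces $\pi_1(X)$ by a quotient, a short induction on the number of $O1$-steps keeps the fundamental group trivial, giving $\pi_1(X)=1$. Every step is brief once the earlier results are in hand; the only genuinely delicate part is the bookkeeping in $(2)\Rightarrow(3)$ that aligns the starting white vertex of Lemma \ref{op1} with the stated $b111$-graph starting point, and that disposes of the single-vertex degenerate case.
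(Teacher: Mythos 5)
Your proposal is correct, and two of the three implications follow the paper exactly: $(1)\Rightarrow(2)$ via Proposition \ref{simplyconnected}, and $(3)\Rightarrow(1)$ via triviality of $\pi_1(X_{b111})$ (which you verify from the presentation, where the paper merely asserts it) together with the quotient property of $O1$ recorded in Remark \ref{remop1}. Where you genuinely diverge is $(2)\Rightarrow(3)$. The paper runs a fresh induction on the number of black vertices: it invokes Corollary 1 of \cite{GGH} to locate a $b111$-subgraph $\Psi$ with a white vertex terminal in $\Gamma_X$, deletes it to split $\Gamma_X$ into two smaller trees $\Gamma_{X'}$ and $\Gamma_{X''}$, applies the inductive hypothesis to $\Gamma_{X'}$, re-attaches $\Psi$ by one $O1$, and engulfs $\Gamma_{X''}$ via Lemma \ref{op1}. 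You instead apply Lemma \ref{op1} once to all of $\Gamma_X$, starting from an arbitrary white vertex $w$, and make the key observation that in the resulting sequence $\{w\}\to\Gamma_1\to\cdots\to\Gamma_X$ the first move is forced: the lone vertex has degree $m=0$, so the only admissible $O1$ has $k=m=0$ and $\Gamma_1$ is precisely the $b111$-graph, whence the tail of the sequence is the required construction from the $b111$-graph. This is leaner — it eliminates the second appeal to \cite{GGH} and the splitting/reassembly bookkeeping, which in effect re-proves what Lemma \ref{op1} already delivers (the paper's aside that $X'$ and $X''$ are simply connected is not even needed for that lemma) — at the cost only of the forced-first-move observation, which is sound since every $O1$ adds exactly one black vertex. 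You also explicitly flag the degenerate case where $\Gamma_X$ is a single white vertex: there $(1)$ and $(2)$ hold but $(3)$ cannot, since $O1$ only adds vertices, so the theorem tacitly assumes at least one black vertex (equivalently $X^{(1)}\neq\emptyset$); the paper leaves this implicit, as its induction starts at one black vertex, so your handling is an improvement in precision rather than a deviation in substance.
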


\begin{proof} (1)$\implies$(2) by Corollary 1 of \cite{GGH}.\\
(2)$\implies$(3) by induction on the number of black vertice of $\Gamma_X$: If this number is $1$, then $\Gamma_X$ is a $b111$-graph, so suppose $\Gamma_X$ has at least two black vertices. By Corollary 1 of \cite{GGH}, $\Gamma_X$ contains a $b111$-subgraph $\Psi$ with a white vertex $w$ which is a terminal vertex of $\Gamma_X$. Let $b$ be the black vertex and $w'$, $w''$ the other white vertices of $\Psi$. Deleting the edges of $\Psi$ together with $b$ and $w$ splits $\Gamma_X$ into two subgraphs $\Gamma_{X'}$ and $\Gamma_{X''}$, each with fewer black vertices than $\Gamma_X$ and $w'\in \Gamma_{X'}$, $w''\in \Gamma_{X''}$. Now $X'$ and $X''$ are simply-connected and satisfy the conditions of Lemma \ref{op1}. By induction, $\Gamma_{X'}$ is obtained from $\Psi$ by successively performing operation $O1$. One further operation $O1$ (starting at $w'$) adds $\Psi$ to $\Gamma_{X'}$ and by Lemma \ref {op1} we can add $\Gamma_{X''}$ by performing successively operation $O1$, starting at $w''$.\\
(3)$\implies$(1): The $b111$-graph is simply connected and by Remark \ref{remop1} operation $O1$ does not change the fundamental group.
\end{proof}

%%%%%%%%%%%%%%%%%%%%%%%%%%%%%%%%%%%%%
\section{Constructing trivalent graphs with edge labels $1$ or $2$.}

For two disjoint labeled graphs $\Gamma_1=\Gamma_{X_1}$ and $\Gamma_2=\Gamma_{X_2}$ with all white vertices of genus $0$, operation $O1*$ described in Figure 7, creates a new graph $\Gamma=\Gamma_{X}$:\\

\begin{figure}[ht]{h}
\begin{center}
\includegraphics[width=4in]{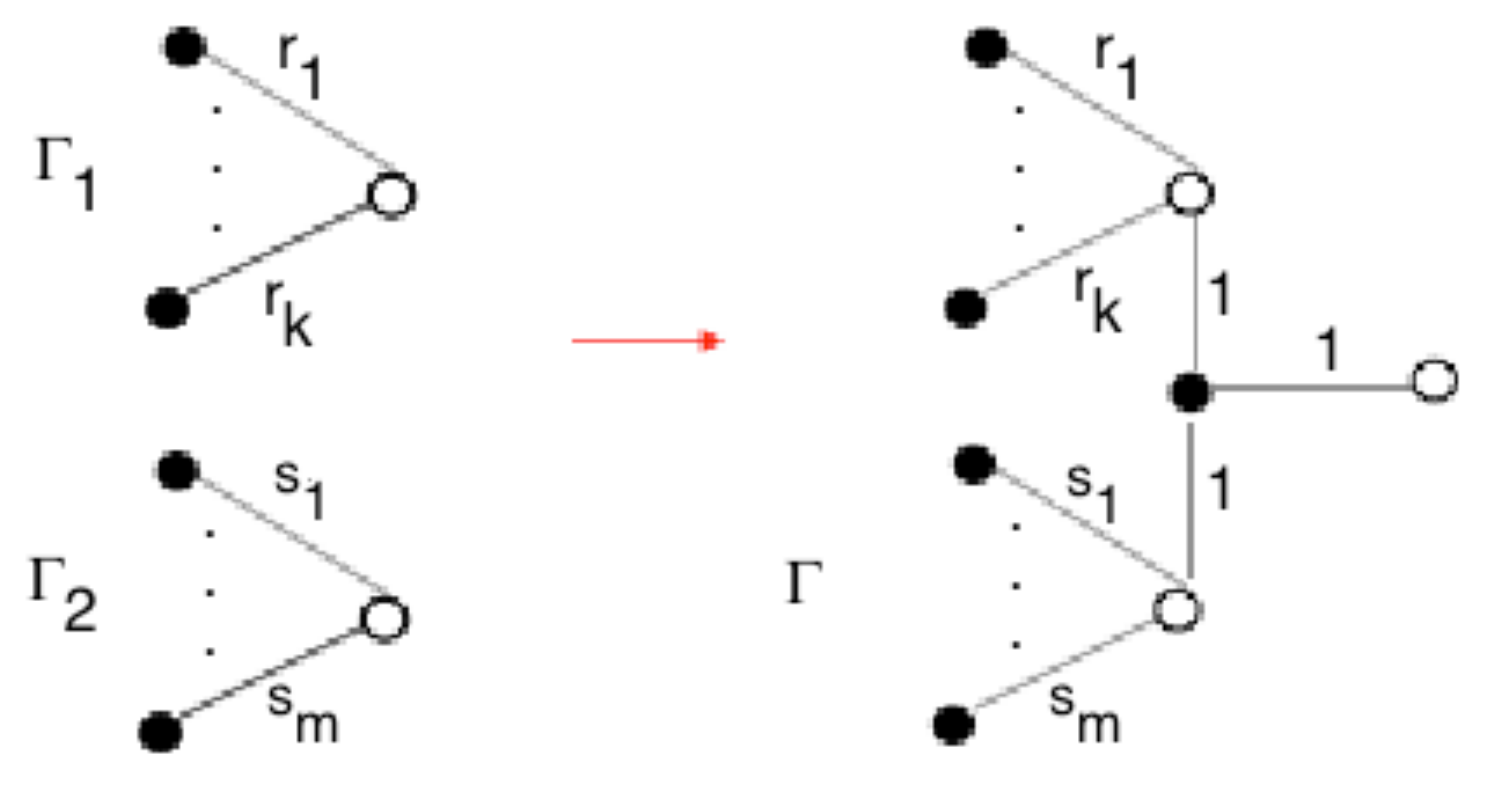}  \qquad \qquad $m \geq 0$
\end{center}
\caption{\,Operation $O1^*$}
\end{figure}

Note that $X$ is obtained from $X_1$ and $X_2$ by identifying a disk in $X_1$ with a disk in $X_2$, therefore :
\begin{rem}\label{remop1*} $\pi_1 (X) \cong \pi_1 (X_1 )*\pi_1 (X_2 )$.\end{rem}

Finally, on a labeled graph $\Gamma=\Gamma_X$ with all white vertices of genus $0$ consider operation  $O2$ described in Figure 4, that changes $\Gamma=\Gamma_X$ to $\Gamma_1 =\Gamma_{X_1}$:

\begin{figure}[ht]
\begin{center}
\includegraphics[width=3in]{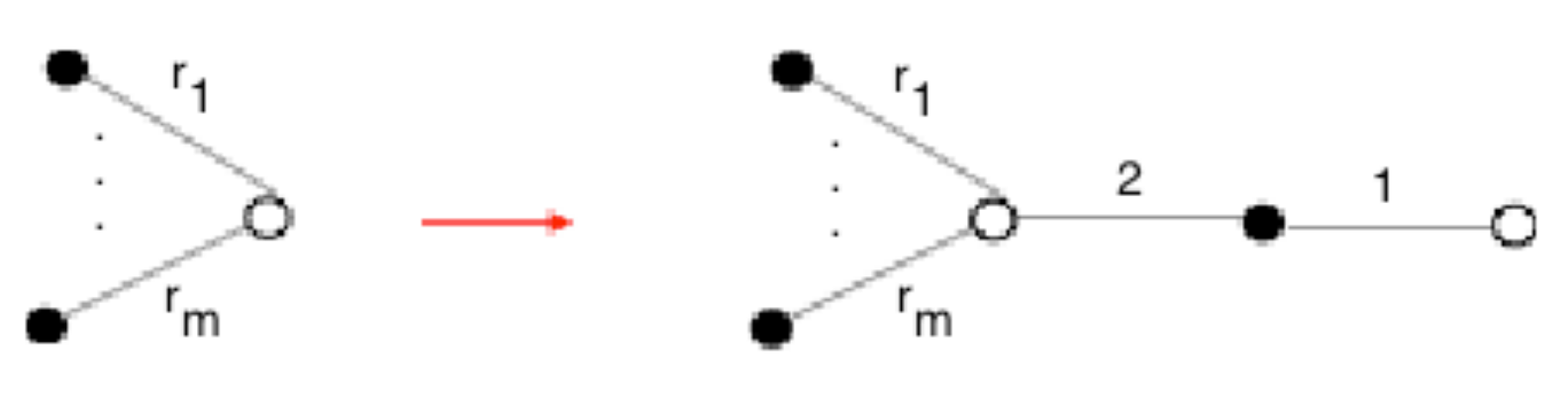}  \qquad \qquad $m \geq 0$
\end{center}
\caption{\,Operation $O2$}
\end{figure}

\begin{rem}\label{remop2} Operation $O2$ does not change the fundamental group.\end{rem}

We now describe the collection $\mathcal{G}$ of all trivalent graphs that can be obtained from a single white vertex by successively applying Operations $O1$ and $O2$.\\

For a collection $\mathcal{C}$ of bipartite labeled graphs denote by $\widehat{\mathcal{C}}$ the collection of all compact, connected bipartite labeled graphs obtained by starting with any $\Gamma_0 \in \mathcal{C}$ and successively performing Operations $O1$ or $O2$. We express this as\\

$\widehat{\mathcal{C}}= \{\emptyset, \Gamma_0 \stackrel{O^1}{\longrightarrow}\dots \stackrel{O^m}{\longrightarrow}
\Gamma \,\,\vert\,\, \Gamma_0 \in \mathcal{C},\,O^i =O1 \text{ or } O2\,;\,m\geq 0\,\}$\\

Let $\circ$ denote (the collection containing only) the graph consisting of one white vertex and let $\mathcal{G}_0 =\widehat{\circ}$\,.\\

For two connected bipartite labeled graphs $\Gamma$ and $\Gamma'$ denote by $\Gamma \Y \Gamma'$ a graph obtained by joining any white vertex of $\Gamma$ to any white vertex of $\Gamma'$ by operation $O1^*$. Note that that there are $v v'$ such $\Gamma \Y \Gamma'$, where $v$ (resp. $v'$) is the number of white vertices of $\Gamma$ (resp. $\Gamma'$). Let \\

$\mathcal{G}_0 \Y \mathcal{G}_0 =\{\,\Gamma \Y \Gamma'\,\vert\,\Gamma , \Gamma' \in \mathcal{G}_0\,\}$\\

In particular, $\mathcal{G}_0 \Y \emptyset =\mathcal{G}_0$ and $\emptyset \Y \emptyset =\emptyset$. Let \\

$\mathcal{G}_1  =\widehat{\mathcal{G}_0 \Y  \mathcal{G}_0}$, and inductively $\mathcal{G}_{n+1}  =\widehat{\mathcal{G}_n \Y  \mathcal{G}_n}$ \\

Then $\mathcal{G}_0 \subset \mathcal{G}_1 \subset \dots \subset \mathcal{G}_n  \subset \dots \,\,\subset \mathcal{G}:=\bigcup_{i=0}^{\infty} \mathcal{G}_i$

\begin{thm} Let $X$ be a trivalent $2$-stratifold. Then $X$ is simply connected if and only if $\Gamma_{X} \in \mathcal{G}$. 
\end{thm}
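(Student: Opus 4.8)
The plan is to prove both directions by induction, using the preceding classification results and the algebraic effect of each operation (Remarks \ref{remop1}, \ref{remop1*}, \ref{remop2}) as the computational engine. For the easy direction, I would show that every $\Gamma \in \mathcal{G}$ represents a simply connected $2$-stratifold by induction on the index $n$ with $\Gamma \in \mathcal{G}_n$, simultaneously tracking the invariant that each intermediate graph represents a $1$-connected stratifold. The base case $\mathcal{G}_0 = \widehat{\circ}$ is handled by noting that the single white vertex is simply connected and that $O1$ and $O2$ preserve the trivial fundamental group (Remarks \ref{remop1} and \ref{remop2}); here I must be slightly careful, since Remark \ref{remop1} only guarantees invariance \emph{when the input is already simply connected}, so the induction hypothesis must carry simple-connectivity through every step, not merely at the end. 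For the inductive step from $\mathcal{G}_n$ to $\mathcal{G}_{n+1}$, the new ingredient is the wedge $\Gamma \Y \Gamma'$ formed by $O1^*$; by Remark \ref{remop1*} its fundamental group is the free product $\pi_1(X_\Gamma) * \pi_1(X_{\Gamma'})$, which is trivial precisely when both factors are, and this is supplied by the induction hypothesis applied to $\mathcal{G}_n$. Then $\widehat{(\cdot)}$ applies further $O1$ and $O2$ moves, again preserving triviality.

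For the harder converse direction, suppose $X$ is a trivalent simply connected $2$-stratifold; I must produce a construction sequence witnessing $\Gamma_X \in \mathcal{G}$. The natural strategy is induction on the number of black vertices of degree $3$ in $\Gamma_X$, using the structural description from the Classification Theorem and Lemma \ref{12-components}. By Proposition \ref{simplyconnected}, $\Gamma_X$ is a tree with all white vertices of genus $0$ and all terminal vertices white. When there are no degree-$3$ black vertices, $\Gamma_X$ is a single $(2,1)$-collapsible tree; since such a graph has only $b12$-segments, it should be expressible purely from a white vertex by $O2$ moves, placing it in $\mathcal{G}_0$. The crux is the case with degree-$3$ black vertices: I would locate a suitable black vertex $b$ of degree $3$ — guided by the second clause of Lemma \ref{12-components}, which guarantees that one of its white neighbors is the root of a $(2,1)$-collapsible component — and cut $\Gamma_X$ along $b$. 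Removing $b$ splits $\Gamma_X$ into pieces, each of which is again a trivalent simply connected stratifold (their fundamental groups are quotients, or are controlled by the pruning in Remark \ref{pruning}), so the induction hypothesis places each piece in some $\mathcal{G}_{n_i}$. Reassembling via $\Y$ (i.e.\ $O1^*$) and $O1$ at the vertex $b$ then exhibits $\Gamma_X$ at level $n+1$.

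The main obstacle I anticipate is twofold. First, I must argue that the pieces obtained by cutting at $b$ are themselves simply connected — not merely that their groups are quotients of the trivial group, but that after re-capping the severed edges (as in the pruning construction) each piece is a genuine trivalent $1$-connected $2$-stratifold to which the induction hypothesis legitimately applies; here Remark \ref{pruning} and the fact that quotients of the trivial group are trivial should suffice, but the bookkeeping on edge labels (whether the severed edges become the label-$1$ edges of an $O1$ or the wedge point of an $O1^*$) is delicate. Second, and more subtly, the operations $O1$, $O2$, and $O1^*$ each assume \emph{all white vertices have genus} $0$, and I must verify that every intermediate graph in the reassembly respects the trivalence constraints on black vertices (partitions $1+1+1$, $1+2$, or $3$); in particular the three edges meeting at a reinstated degree-$3$ black vertex must all carry label $1$, which forces the correct choice of which neighbor is a root and dovetails with the ``at least one neighbor is a root'' clause of Lemma \ref{12-components}. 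Reconciling the $O2$-only description of the $(2,1)$-collapsible components with the $O1$/$O1^*$ assembly at the degree-$3$ vertices is where the argument must be assembled most carefully.
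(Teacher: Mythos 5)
Your forward direction is correct and is essentially the paper's: induction through the construction sequence, with the free-product observation of Remark \ref{remop1*} handling the joins and Remarks \ref{remop1} and \ref{remop2} handling $O1$ and $O2$ (your caveat that Remark \ref{remop1} needs simple connectivity carried through every intermediate step is well taken). The converse, however, has a genuine gap at the reassembly step. You cut at an \emph{interior} black vertex $b$ of degree $3$, chosen via the root clause of Lemma \ref{12-components}, which produces \emph{three} nontrivial pieces. But no operation in the calculus can reattach three existing graphs at a new black vertex: $O1^*$ joins exactly \emph{two} graphs, and the third prong of the new $b111$-star is a freshly created terminal white vertex; $O1$ and $O2$ only grow new material inside a single graph and never merge components. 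So to realize $\Gamma_X$ from your pieces you would have to regrow the third piece move-by-move from that new prong vertex, which requires a lemma of the form ``a simply connected trivalent graph can be grown from a prescribed white vertex.'' That is exactly what fails once label-$2$ edges are present: a $(2,1)$-collapsible tree --- even a single pendant $w \,\text{---}(2)\text{---}\, b \,\text{---}(1)\text{---}\, w'$ --- can be $O2$-grown only from its root, since $O2$ always attaches its label-$2$ edge at the pre-existing vertex. Lemma \ref{op1} gives ``growable from \emph{any} white vertex'' only in the all-label-$1$ setting, and the root clause guarantees one root among $b$'s three neighbors, not that the two remaining pieces are growable from their attachment points. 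You flag this reconciliation as delicate, but it is not a bookkeeping issue; as stated the inductive step does not close.

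The paper sidesteps this entirely by inducting on the total number of black vertices and peeling at a \emph{terminal} white vertex $w$ whose incident edge has label $1$; the existence of such a vertex is Lemma 4 of \cite{GGH} (the contrapositive of ``all terminal edges of label $2$ forces $\pi_1 \neq 1$,'' the same lemma used inside Lemma \ref{12-components}). The star of the adjacent black vertex $b$ is then either a pendant $b12$-graph, which is removed by an inverse $O2$ (Remark \ref{remop2} keeps the complement simply connected), or a $b111$-graph one of whose prongs is the terminal vertex $w$ itself, so its deletion leaves only \emph{two} pieces $\Gamma_X'$, $\Gamma_X''$, each simply connected because $\pi_1(X) \cong \pi_1(X')\ast\pi_1(X'')$ by Remark \ref{remop1*}, and $\Gamma_X$ is then literally an $O1^*$ join of the two. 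In other words, the correct cut point is dictated by a terminal label-$1$ edge, so that one of the three prongs is exactly the disposable terminal vertex that $O1^*$ creates --- the ingredient your interior cut lacks. Your observation that a $(2,1)$-collapsible tree lies in $\mathcal{G}_0$ via $O2$ moves from its root is correct, but with the terminal-edge peeling it is subsumed in the induction, and neither the Classification Theorem nor Lemma \ref{12-components} is actually needed for this proof.
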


\begin{proof} If $\Gamma_X \in \mathcal{G}$ then $\pi_1 (X)=1$ by Remarks \ref{remop1*} and \ref{remop2}. 

Suppose $\pi_1 (X)=1$. If $\Gamma_X$ has no black vertices, or exactly one black vertex, then $\Gamma_X \in \mathcal{G}$. In any case $\Gamma_X$ is a tree with all white vertices of genus $0$ and all terminal edges white.  Furthermore by Lemma 4 of \cite{GGH},  $\Gamma_X$ contains a terminal vertex $w$ with incident edge $e$ of label $1$. Let $b$ be the black vertex incident to $e$. Then the star of $b$ in $\Gamma_X$ is a $b12$-graph or a $b111$-graph. In the first case star$(b)$ has two (open) edges  $e$,$e'$  where $e'$ has label $2$. The subgraph $\Gamma_X'$ obtained from $\Gamma_X$ by deleting $w\cup b \cup e \cup e'$ is simply connected (by Remark \ref{remop2}). By induction on the number of black vertices $\Gamma_{X}' \in \mathcal{G}$ and since $\Gamma_{X}$ is obtained from $\Gamma_{X}'$ by operation $O2$, it follows that $\Gamma_{X} \in \mathcal{G}$. 

In the second case, star($b$) has three edges  $e$,$e'$,$e''$, each with label $1$. The subgraphs $\Gamma_X'$ and $\Gamma_X''$ obtained from $\Gamma_X$ by deleting $w\cup b \cup e \cup e' \cup e''$ are simply connected by Remark \ref{remop1*}. By induction, $\Gamma_{X}' $ and $\Gamma_X''$ are in $\mathcal{G}$ and since $\Gamma_{X}$ is obtained from $\Gamma_{X}'$ and $\Gamma_X''$ by operation $O1^*$, it follows that $\Gamma_{X} \in \mathcal{G}$.
\end{proof}

{\bf Acknowledgments:} J. C. G\'{o}mez-Larra\~{n}aga would like to thank LAISLA and INRIA Saclay for financial support  and INRIA Saclay and IST Austria for their hospitality.

\end{document}